\definecolor{codegreen}{rgb}{0,0.6,0}
\definecolor{codegray}{rgb}{0.5,0.5,0.5}
\definecolor{codepurple}{rgb}{0.58,0,0.82}
\definecolor{backcolour}{rgb}{0.95,0.95,0.92}
\lstdefinestyle{mystyle}{
    backgroundcolor=\color{backcolour},   
    commentstyle=\color{codegreen},
    keywordstyle=\color{magenta},
    numberstyle=\tiny\color{codegray},
    stringstyle=\color{codepurple},
    basicstyle=\ttfamily\footnotesize,
    breakatwhitespace=false,         
    breaklines=true,                 
    captionpos=b,                    
    keepspaces=true,                 
    numbers=left,                    
    numbersep=5pt,                  
    showspaces=false,                
    showstringspaces=false,
    showtabs=false,                  
    tabsize=2
}
\tikzset{negated/.style={
        decoration={markings,
            mark= at position 0.5 with {
                \node[transform shape] (tempnode) {$\backslash$};
            }
        },
        postaction={decorate}
    }
}
\newcolumntype{M}[1]{>{\centering\arraybackslash}m{#1}}
\newcolumntype{N}{@{}m{0pt}@{}}
\newtheorem{theorem}{Theorem}[section]
\theoremstyle{plain}
\newtheorem{conjecture}[theorem]{Conjecture}
\newtheorem*{theorem*}{Theorem A}
\newtheorem{lemma}[theorem]{Lemma}
\theoremstyle{definition}
\newtheorem{definition}[theorem]{Definition}
\newcommand{\nth}{\textup{th}}
\newcommand{\N}{\mathbb{N}}
\newcommand{\R}{\mathbb{R}}
\newcommand{\F}{\mathbb{F}}
\newcommand{\Z}{\mathbb{Z}}
\newcommand{\norm}[1]{\left\lVert#1\right\rVert}
\numberwithin{equation}{subsection}
\title{Low Discrepancy Digital Kronecker-Van der Corput
Sequences}
\date{}
\author{Steven Robertson\footnote{steven.robertson@manchester.ac.uk}}
\begin{document}
\maketitle
\begin{abstract}
    \noindent The discrepancy of a sequence measures how quickly it approaches a uniform distribution. Given a natural number $d$, any collection of one-dimensional so-called low discrepancy sequences $\left\{S_i:1\le i \le d\right\}$ can be concatenated to create a $d$-dimensional \textit{hybrid sequence} $(S_1,\dots,S_d)$. Since their introduction by Spanier in 1995, many connections between the discrepancy of a hybrid sequence and the discrepancy of its component sequences have been discovered. However, a proof that a hybrid sequence is capable of being low discrepancy has remained elusive. This paper remedies this by providing an explicit connection between Diophantine approximation over function fields and two dimensional low discrepancy hybrid sequences. \\
    
    \noindent Specifically, let $\F_q$ be the finite field of cardinality $q$. It is shown that some real numbered hybrid sequence $\mathbf{H}(\Theta(t),P(t)):=\textbf{H}(\Theta,P)$ built from the digital Kronecker sequence associated to a Laurent series $\Theta(t)\in\F_q((t^{-1}))$ and the digital Van der Corput sequence associated to an irreducible polynomial $P(t)\in\F_q[t]$ meets the above property. More precisely, if $\Theta(t)$ is a counterexample to the so called $t$\textit{-adic Littlewood Conjecture} ($t$-$LC$), then another Laurent series $\Phi(t)\in\F_q((t^{-1}))$ induced from $\Theta(t)$ and $P(t)$ can be constructed so that $\mathbf{H}(\Phi,P)$ is low discrepancy. Such counterexamples to $t$-$LC$ are known over a number of finite fields by, on the one hand, Adiceam, Nesharim and Lunnon, and on the other, by Garrett and the author.
\end{abstract}
\section{Introduction}
\noindent Let $d$ be a positive integer. Throughout this paper, $\mu_d(\mathcal{B})$ refers to the $d$-dimensional Lebesgue measure of a measurable set $\mathcal{B}$ in $\R^d$ and $\#S$ to the cardinality of a finite set $S$. For a sequence $(\mathbf{z}_n)_{n\ge1}$ in $\R^d$, define\begin{equation*}
    \#(\mathcal{B},\mathbf{z},N)=\#\{n\in\N: 1\le n \le N, ~ \mathbf{z}_n \in \mathcal{B}\}.
\end{equation*} 
\noindent Recall that a $d$-dimensional sequence $\mathbf{z}=(\mathbf{z}_n)_{n\ge1}$ is \textit{uniformly distributed} if for every box $\mathcal{B}\in [0,1]^d$\begin{equation}\lim_{N\to\infty}\frac{\#(\mathcal{B},\mathbf{z},N)}{N} = \mu_d(\mathcal{B}).\label{ud}\end{equation}  

\noindent The \textit{discrepancy} of a sequence estimates quantitatively and uniformly the speed of convergence in (\ref{ud}).
\begin{definition}\label{discrepancy}
    Let $d$ and $N$ be a positive integers and let $\mathbf{z}=(\mathbf{z}_n)_{n\ge1}$ be a sequence in the unit cube $[0,1)^d$. The \textit{discrepancy} of the finite sequence $(\mathbf{z}_n)_{1\le n \le N}$  is defined as \[D_{N}(\mathbf{z})=\sup_{\mathcal{B}\subset [0,1)^d} \left|\frac{\#(\mathcal{B},\mathbf{z},N)}{N}-\mu_d(\mathcal{B})\right|,\] where the supremum is taken over all axis-parallel boxes $\mathcal{B}\subset [0,1)^d$. Similarly, the \textit{star discrepancy} of a sequence, denoted $D^*_{N}(\mathbf{z})$, is defined with the additional condition that $\mathcal{B}$ must have one corner at the origin. 
\end{definition}
\noindent The following inequality from \cite[Theorem 1.3]{ud} shows that in order to understand how the discrepancy of a sequence behaves up to a multiplicative constant, it is sufficient to study its star discrepancy: \[D^*_{N}(\mathbf{z})\le D_{N}(\mathbf{z})\le 2^d D^*_{N}(\mathbf{z}).\] Famously, Roth \cite{Roth} proved in 1954 that every sequence in the $d$-dimensional unit cube satisfies \begin{equation}D^*_{N}(\mathbf{z})\gg_d \frac{\log^{\frac{d-1}{2}}(N)}{N}\cdot\nonumber\end{equation}
\noindent Above and throughout, $x\gg y$ means $x\ge cy$, where $c$ is some positive constant and $x,y>0$. The complementary symbol $\ll$ is defined similarly. If there is a subscript, then the implied constant is meant to depend on the stated variables only. \\

\noindent Proving if Roth's result is optimal constitutes one of the main open problems in the theory of Uniform Distribution. It is often conjectured\footnote{This conjecture is not widely agreed upon, as many people believe that Roth's result is optimal.} (see \cite{book}, \cite{bad}) that every infinite sequence $\mathbf{z}$ in the $d$-dimensional unit cube satisfies \begin{equation}
    D_{N}(\mathbf{z})\gg_{d}\frac{\log^{d}(N)}{N}\cdot
\label{conj}\end{equation}

\noindent Hence, every infinite sequence $\mathbf{z}$ satisfying \begin{equation}\label{low}
    D_{N}(\mathbf{z})\ll_{d,\mathbf{z}}\frac{\log^{d}(N)}{N}
\end{equation}
\noindent is called a \textit{low discrepancy} sequence. 
For a wider view of low discrepancy sequences, see \cite{survey}. \\

\noindent In a series of recent papers \cite{Levin4,Levin3,Levin2,Levin1}, Levin proved that many of the known low discrepancy sequences also satisfy (\ref{conj}), adding further evidence that (\ref{low}) is, in fact, optimal.\\

\noindent This paper focuses on two examples of low discrepancy sequences - Van der Corput sequences and Kronecker sequences. These are defined properly in Section \ref{sect:LD}, but for the purposes of this introduction only the following properties are necessary: \begin{enumerate}
    \item a Van der Corput sequence is defined entirely by a given a natural number $b\ge2$.
    \item a Kronecker sequence is defined entirely by a given irrational number $\alpha\in\R$.
\end{enumerate}
\noindent However, in order for the Kronecker sequence associated to $\alpha\in\R$ to be low discrepancy, $\alpha$ must satisfy a stronger condition than just being irrational. In order to state this extra condition, define the \textit{distance to the nearest integer} $\norm{\alpha}:=\min_{n\in\Z}|n-\alpha|$ and let the \textit{Badly Approximable Numbers} be the set \begin{equation}\textbf{Bad}:=\left\{\alpha\in[0,1) : \inf_{q\in\N}q\cdot\norm{q\alpha}=c(\alpha)>0 \right\}.\label{bad-def}\end{equation} For more information on $\textbf{Bad}$ and the theory of Diophantine Approximation, see \cite{DA_Book} and \cite{DA}. \\

\noindent The Badly Approximable Numbers are related to Kronecker sequences by the following classical result from Niederreiter \cite[Theorem 3.4]{ud}. \begin{theorem}\label{kron}
    Let $\alpha$ be a real number. Then, $\alpha\in \textbf{Bad}$ if and only if its associated Kronecker sequence is low discrepancy.\end{theorem}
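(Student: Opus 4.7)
The approach links the Diophantine properties of $\alpha$ to the discrepancy of its Kronecker sequence through the continued fraction expansion $\alpha=[a_0;a_1,a_2,\ldots]$ with convergents $p_n/q_n$. A standard equivalence shows that $\alpha\in\textbf{Bad}$ if and only if the partial quotients $(a_i)_{i\ge 1}$ are uniformly bounded; this uses the best-approximation property of convergents together with the identity $\norm{q_n\alpha}=1/(q_{n+1}+q_n[0;a_{n+2},a_{n+3},\ldots])$, which also supplies quantitative comparisons of $q\cdot\norm{q\alpha}$ between arbitrary $q$ and the nearest denominator $q_n$.

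For the forward implication, assume $a_i\le M$ for every $i$. The plan is to invoke the three-distance theorem, which describes the gap structure of $(\{k\alpha\})_{k=1}^{N}$ as a partition of $[0,1)$ into intervals of at most three distinct lengths. Combined with the Ostrowski representation $N=\sum_{i\ge 0}c_{i+1}q_i$, where $0\le c_{i+1}\le a_{i+1}$, this yields the classical bound
\[ND_N^*(\{n\alpha\})\le C\Bigl(1+\sum_{i=1}^{n}c_i\Bigr)\]
for $q_n\le N<q_{n+1}$. The hypothesis forces $c_i\le M$, so the right-hand side is at most $Mn+C$; meanwhile the recurrence $q_n=a_nq_{n-1}+q_{n-2}$ with $a_n\ge 1$ forces $q_n\ge\phi^{\,n-1}$ where $\phi$ is the golden ratio, so $n\ll\log N$. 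Multiplying these gives the desired $D_N^*\ll_M\log N/N$.

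For the converse I would argue by contrapositive, assuming the partial quotients $(a_i)$ are unbounded, and choose a subsequence $(n_k)$ along which $a_{n_k+1}\to\infty$. The strategy is to construct, for each $k$, an integer $N_k$ (a suitable multiple of $q_{n_k}$, chosen so that its Ostrowski representation has a single dominant digit $c_{n_k+1}\asymp a_{n_k+1}$) together with an axis-parallel box whose count defect is at least a positive constant times $a_{n_k+1}$. This reverses the upper bound of the previous paragraph: the gap of length $\asymp a_{n_k+1}\norm{q_{n_k}\alpha}$ that the three-distance theorem locates inside $[0,1)$ serves as the witness interval. One then verifies that $\log N_k$ grows sufficiently slowly relative to $a_{n_k+1}$ along the subsequence to yield $\limsup_{k}N_kD_{N_k}^*(\{n\alpha\})/\log N_k=\infty$.

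The main obstacle is the converse direction: because $q_{n+1}=a_{n+1}q_n+q_{n-1}$, the quantity $\log q_{n+1}$ already contains a $\log a_{n+1}$ contribution, so the subsequence $(n_k)$ must be chosen with care to guarantee that $a_{n_k+1}/\log q_{n_k+1}$ can be made arbitrarily large along it. A clean route to close this gap is via Schoissengeier's asymptotic formula, which expresses $ND_N^*(\{n\alpha\})$ directly in terms of the full Ostrowski digit vector of $N$ and renders the equivalence with boundedness of $(a_i)$ transparent in both directions; the forward bound above is the easier half extracted from this framework.
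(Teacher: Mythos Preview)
The paper does not supply its own proof of this theorem: it is quoted as a classical result and attributed to Niederreiter \cite[Theorem~3.4]{ud}. So there is no in-paper argument to compare against; your plan is being measured against the standard literature proof, which is essentially the one you outline.

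Your forward direction is correct and is exactly the textbook argument: bounded partial quotients $a_i\le M$ combined with the Ostrowski-digit bound $ND_N^*\le C\bigl(1+\sum_{i\le n}c_i\bigr)$ and the growth $q_n\ge\phi^{\,n-1}$ immediately give $ND_N^*\ll_M\log N$.

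For the converse, the obstacle you flag is genuine and your first-pass plan does \emph{not} close on its own. Picking a single index with $a_{n_k+1}$ large and exhibiting a count defect of order $a_{n_k+1}$ near $N_k\asymp q_{n_k}$ only yields
\[
\frac{N_kD_{N_k}^*}{\log N_k}\;\gtrsim\;\frac{a_{n_k+1}}{\log q_{n_k}+\log a_{n_k+1}},
\]
and this ratio can tend to $0$ even when the $a_i$ are unbounded (e.g.\ $a_i=i$, where $\log q_m\asymp m\log m$ dwarfs $a_{m+1}\asymp m$). The correct lower bound must accumulate contributions from \emph{many} partial quotients, not just one. Your proposed fix---invoking Schoissengeier's exact formula for $ND_N^*$ in terms of the full Ostrowski digit vector---is precisely what is needed: for suitably chosen $N$ (e.g.\ $N=q_{m+1}-1$) it gives $ND_N^*\gg\sum_{i\le m}a_i$, and comparing this with $\log q_m\le\sum_{i\le m}\log(a_i+1)$ shows by convexity that the ratio is unbounded whenever the $a_i$ are. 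So your plan is sound once you commit to the Schoissengeier route; just be aware that the simpler three-distance/single-gap argument you sketch first is a dead end for the converse.
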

\noindent Theorem \ref{kron} shows there is a strong connection between the Badly Approximable Numbers and the discrepancy of Kronecker sequences derived from them. Therefore, it is not unreasonable to predict that if a real number $\alpha$ satisfied a stronger notion of bad approximability, then its associated Kronecker sequence may also have tighter bounds on its discrepancy. To this end, let $p$ be a prime and define the set

 \begin{equation} \textbf{Bad}_p=\left\{\alpha\in \R: \inf_{\substack{q\in\N\backslash\{0\}\\k\ge0}}q\cdot \norm{q\cdot p^k\cdot \alpha}>0\right\}. \label{badp} \end{equation} \noindent That is, $\textbf{Bad}_p$ is the set of all the numbers $\alpha$ such that $p^k\alpha$ is badly approximable for every integer $k\ge0$ and such that the value of $c(p^k\alpha)$ from (\ref{bad-def}) is uniformly bounded below.\\ 

\noindent Note that $\textbf{Bad}_p\subset\textbf{Bad}$, making this a natural way to strengthen the set $\textbf{Bad}$. However, the $p$\textit{-adic Littlewood Conjecture} ($p$-$LC$) by de Mathan and Teulié \cite{padic} states that the set $\textbf{Bad}_p$ is empty for every prime $p$. Hence, if one wishes to create a low discrepancy sequence, $\textbf{Bad}_p$ is unlikely to be useful. To bypass this issue, one moves to the function field set up. \\

\noindent Throughout this paper, let $q$ be a positive power of the prime number $p$ and let $\F_q$ be the finite field with cardinality $q$. Furthermore, let $\F_q[t]$ be the ring of polynomials with coefficients in $\F_q$ and let $\F_q((t^{-1}))$ be the field of formal Laurent series with coefficients in $\F_q$. An element $\Theta(t)\in\F_q((t^{-1}))$ is expanded as \begin{equation}\Theta(t)=\sum_{i=-h}^\infty a_it^{-i}\label{expansion}\end{equation} where $h$ is an integer, $a_i$ are in $\F_q$ and $a_{-h}\neq0$. Additionally, $h$ is called the \textit{degree} of $\Theta(t)$. \\

\noindent If $q$ is not prime, then $q=p^n$ for some integer $n>1$. In this case, the field $\F_q$ is given by $\F_p[x]/p(x)$ where $p(x)\in\F_p[x]$ is an irreducible polynomial of degree $n$. Therefore, each element of $\F_q$ is represented by a unique polynomial $f(x)\in\F_p[x]$ satisfying $\deg(f(x))<n$. Using a different irreducible polynomial of equal degree in place of $p(x)$ results in an isomorphic field, and so there is no need to specify $p(x)$ further. Hence, for the purposes of this paper, consider $p(x)$ to be fixed. The variable $x$ is used to avoid confusion later on, when Laurent series in the variable $t$ will have coefficients in $\F_q$.\\

\noindent To find the function field analogue of the $p$-$LC$, one replaces each real component of the conjecture with its own function field analogue. Here, the analogue of the real numbers (integers, respectively) is $\F_q((t^{-1}))$ ($\F_q[t]$, respectively). Similarly, the set of prime numbers is replaced by the set of irreducible polynomials. Next, one defines the norm of $\Theta(t)\in\F_q((t^{-1}))$ as \[|\Theta(t)|=q^{\deg(\Theta(t))}.\] Finally, using notation from (\ref{expansion}), the \textit{distance to the nearest polynomial} of $\Theta(t)$ is defined as \[|\langle\Theta(t)\rangle|=\left|\sum_{i=1}^\infty a_it^{-i}\right|.\]

\noindent Analogues of both Van der Corput sequences and Kronecker sequences exist over function fields. The entries of the resulting sequences are in $\F_q((t^{-1}))$, but when each term is evaluated\footnote{If $q$ is not prime, then a bijection between $\F_q$ and $\{i\in\N: 0\le i <q\}$ is also needed. More on this in Section 2.} at $t=p$ they become real sequences in the unit interval. The resulting sequences are given the prefix \textit{digital}. For the purposes of this introduction, only the following properties are relevant. \begin{enumerate}
    \item A digital Van der Corput sequence is defined entirely by the choice of $q$ and the choice of a polynomial $B(t)\in\F_q[t]$ such that $\deg(B(t))\ge1$. This sequence is denoted $\left(V_n(B)\right)_{n\ge0}:=\left(V_n(B(t))\right)_{n\ge0}$.
    \item A digital Kronecker sequence is defined entirely by the choice of $q$ and the choice of irrational Laurent series $\Theta(t)\in\F_q((t^{-1}))$. This sequence is denoted $\left(K_n(\Theta)\right):=\left(K_n(\Theta(t))\right)_{n\ge0}$
\end{enumerate} A $d$-dimensional (digital) \textit{Halton} sequence is defined as $d$ different (digital) Van der Corput sequences concatenated together. For a set of polynomials $\mathbf{B}(t):=\{B_i(t)\in\F_q[t]: 1\le i \le d\}$, the $d$-dimensional digital Halton sequence comprised of the digital Van der Corput sequences associated to the $B_i(t)$ is denoted $\left(V_n(\textbf{B}(t))\right)_{n\ge0}$.\\

\noindent The Van der Corput and Kronecker sequences retain their low discrepancy property when translated into their digital counterparts. Specifically, it was proved in \cite{DigHal} that the digital Van der Corput sequence is low discrepancy. Similarly it was shown in \cite[Theorem 4.48]{digkron} that the digital Kronecker sequence associated to $\Theta(t)\in\F_q((t^{-1}))$ is low discrepancy if $\Theta(t)$ is in the function field analogue of $\textbf{Bad}$, which is defined as follows.\\
\begin{equation*}\label{badff}
    \textbf{Bad}(q):=\left\{\Theta(t)\in\F_q((t^{-1})):\inf_{N(t)\in\F_q[t]\backslash\{0\}}N(t)\cdot |\langle N(t)\cdot \Theta(t)\rangle|>0\right\}.
\end{equation*}
\noindent Additionally, given an irreducible polynomial $P(t)\in\F_q[t]$, the function field analogue of $\textbf{Bad}_p$ is given by
    \begin{equation}\label{badpt}\textbf{Bad}(P(t),q):=\left\{\Theta(t)\in\F_q((t^{-1})): q^{-D({\Theta})}:=
    \inf_{\substack{N(t)\in\F_q[t]\backslash\{0\}\\k\ge0}}N(t)\cdot |\langle N(t)\cdot P(t)^k\cdot \Theta(t)\rangle|>0\right\}.
\end{equation}
Here, the value $D({\Theta})\in\N$ is the called the \textit{deficiency} of $\Theta(t)$. \\

\noindent Just as in the real case, de Mathan and Teulié \cite{padic} conjectured that for any choice of finite field $\F_q$ and any choice of irreducible polynomial $P(t)\in\F_q[t]$, the set $\textbf{Bad}(P(t),q)$ is empty. This is known as the $P(t)$\textit{-adic Littlewood Conjecture}. However, it has since been disproved in a recent groundbreaking paper by Adiceam, Nesharim and Lunnon \cite{Faustin}. \\

\noindent Specifically, Adiceam, Nesharim and Lunnon \cite{Faustin} showed that, when $\F_q$ has characteristic 3, the set $\textbf{Bad}(t,q)$ is not empty. This result has since been extended in a paper by the author and Garrett \cite{My_paper2} which proved that for any choice of irreducible polynomial $P(t)\in\F_q[t]$, the set $\textbf{Bad}(P(t),q)$ is non-empty for fields of characteristic 3, 5, 7 and 11. In each of these cases, an element of $\textbf{Bad}(P(t),q)$ is given explicitly. \\

\noindent In the case where $P(t)\neq t$, every known element in $\textbf{Bad}(P(t),q)$ has been induced from an element of $\textbf{Bad}(t,q)$ by applying the following theorem from \cite[Theorem 1.0.3]{My_paper}. \begin{theorem}\label{induce}
    Let $q\in\N$ be a power of a prime and $P(t)\in\F_q[t]$ be an irreducible polynomial. Then for any $\Theta(t)=\sum_{i=1}^\infty a_it^{-i}$ in $\textbf{Bad}(t,q)$, the Laurent series \[\Theta(P(t)):=\sum_{i=1}^\infty a_i P(t)^{-i}\textup{ lies in }\textbf{Bad}(P(t),q).\]
\end{theorem}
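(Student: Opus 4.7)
The plan is to exploit the fact that $\F_q((t^{-1}))$ is a degree $d:=\deg P$ field extension of $\F_q((P(t)^{-1}))$ with basis $\{1,t,\ldots,t^{d-1}\}$. Setting $Y:=P(t)$, every $N(t)\in\F_q[t]$ admits a unique expansion
\[
N(t)=\sum_{i=0}^{d-1}N_i(Y)\,t^i,\qquad N_i(Y)\in\F_q[Y].
\]
Renaming the formal variable $t$ to $Y$ in the hypothesis $\Theta(t)\in\textbf{Bad}(t,q)$ produces a constant $c_0>0$ such that, for every nonzero $M(Y)\in\F_q[Y]$ and every $k\ge 0$,
\[
q^{\deg_Y M\,+\,\deg_Y\langle M(Y)\,Y^k\,\Theta(Y)\rangle_Y}\ \ge\ c_0,
\]
where $\langle\cdot\rangle_Y$ denotes the fractional part of an element of $\F_q((Y^{-1}))$.

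Next I would unfold
\[
N(t)\,P(t)^k\,\Theta(P(t))\ =\ \sum_{i=0}^{d-1}\bigl(N_i(Y)\,Y^k\,\Theta(Y)\bigr)\,t^i
\]
and split each bracket into its $Y$-polynomial part plus $\langle N_i(Y)\,Y^k\,\Theta(Y)\rangle_Y$. The polynomial parts contribute only to the polynomial-in-$t$ component of the whole expression; the remainders $\langle N_iY^k\Theta(Y)\rangle_Y$ have $t$-degree at most $-d$, so after multiplication by $t^i$ with $0\le i\le d-1$ they still have $t$-degree at most $-1$. Consequently,
\[
\langle N(t)\,P(t)^k\,\Theta(P(t))\rangle\ =\ \sum_{i=0}^{d-1}\langle N_i(Y)\,Y^k\,\Theta(Y)\rangle_Y\cdot t^i.
\]

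The key observation is that each summand on the right has $t$-degree congruent to $i$ modulo $d$, so the summands' $t$-degrees are pairwise distinct and no cancellation can occur in the leading term. This gives
\[
\deg_t\langle NP^k\Theta(P)\rangle=\max_{i}\bigl[d\,\deg_Y\langle N_iY^k\Theta\rangle_Y+i\bigr],\qquad \deg_tN=\max_{i}\bigl[d\,\deg_YN_i+i\bigr].
\]
Choosing $i_0$ to realise the maximum on the left, combining the two displays, and applying the translated hypothesis to $M(Y):=N_{i_0}(Y)$ yields
\[
\deg_tN+\deg_t\langle NP^k\Theta(P)\rangle\ \ge\ d\bigl[\deg_YN_{i_0}+\deg_Y\langle N_{i_0}Y^k\Theta\rangle_Y\bigr]+2i_0\ \ge\ d\log_q c_0,
\]
i.e.\ the uniform lower bound $|N(t)|\cdot|\langle N(t)P(t)^k\Theta(P(t))\rangle|\ge c_0^d$, from which $\Theta(P(t))\in\textbf{Bad}(P(t),q)$.

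The main obstacle is confirming the residue-class picture: checking that $\langle\cdot\rangle_Y\cdot t^i$ remains strictly fractional in $t$ and that the different $i$-components of the fractional part live in distinct residue classes modulo $d$ is exactly what prevents leading-term cancellation and lets the hypothesis be invoked for a single component $N_{i_0}$. Once this structural point is in place, the rest of the argument is essentially bookkeeping with valuations.
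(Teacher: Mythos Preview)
The paper does not actually prove Theorem~\ref{induce}; it quotes it from \cite[Theorem~1.0.3]{My_paper} and uses it as a black box. So there is no in-paper argument to compare your proposal against.

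On its own merits, your argument is sound. The decisive structural fact you isolate---that $\{1,t,\dots,t^{d-1}\}$ is an $\F_q[Y]$-basis of $\F_q[t]$ with $Y=P(t)$, and that any Laurent series lying in $t^i\cdot \F_q((Y^{-1}))$ has $t$-degree congruent to $i$ modulo $d$---is correct and is exactly what forces the identities
\[
\langle N(t)P(t)^k\Theta(P(t))\rangle=\sum_{i=0}^{d-1}\langle N_i(Y)Y^k\Theta(Y)\rangle_Y\, t^i,\qquad \deg_t(\,\cdot\,)=\max_i\bigl[d\cdot \deg_Y(\,\cdot\,)_i+i\bigr]
\]
to hold without leading-term cancellation. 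Once that is in place, picking the index $i_0$ that realises the maximum in $\deg_t\langle NP^k\Theta(P)\rangle$ and combining with $\deg_tN\ge d\,\deg_YN_{i_0}+i_0$ lets you transfer the $\textbf{Bad}(t,q)$ bound for $N_{i_0}(Y)$ directly, yielding deficiency at most $d\cdot D(\Theta)$ for $\Theta(P(t))$.

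Two small points you should make explicit in a clean write-up: (i) when $N_i=0$ the corresponding summand vanishes and does not compete in the maximum, so the chosen $i_0$ automatically has $N_{i_0}\neq 0$, which is needed to invoke the hypothesis; (ii) the polynomial part of $N_i(Y)Y^k\Theta(Y)$, multiplied by $t^i$ with $0\le i\le d-1$, really is a polynomial in $t$ (not merely of nonnegative $Y$-degree), so it contributes nothing to $\langle\cdot\rangle_t$. Both are straightforward, but they are precisely the ``residue-class picture'' you flag as the main obstacle, so spell them out.
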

\noindent It is unknown if any elements of $\textbf{Bad}(P(t),q)$ exist that are not induced from an element of $\textbf{Bad}(t,q)$.\\

\noindent As in the real set up, it is reasonable to expect the digital Kronecker sequence associated to some $\Theta(t)\in \textbf{Bad}(P(t),q)$ to have good distribution properties. However, a fundamental result by Schmidt \cite{Schmidt} states that every one dimensional real sequence $\textbf{x}=(k_n)_{n\ge0}$ satisfies \[D_{N}(x)\gg \frac{\log(N)}{N}\cdotp\] Therefore, to take advantage of $\Theta(t)$ being in $\textbf{Bad}(P(t),q)$, one must work in at least two dimensions. To this end, the concept of a hybrid sequence is recalled. The main idea is to create new $d$-dimensional low discrepancy sequences by concatenating $d$ one-dimensional low discrepancy sequences. The discrepancy properties of hybrid sequences have been studied extensively (see \cite{hybrid1,hybrid2,bad} for a non-exhaustive list), but so far no low discrepancy hybrid sequences have been found. One such result \cite[Theorem 2]{bad} that is relevant for the present discussion is the following.  \begin{theorem}\label{bad_bound}Let $d\ge1$ be a natural number, let $\Theta(t)\in\F_q((t^{-1}))$ be in $\textbf{Bad}(q)$ and let $\textbf{B}(t)$ be a $d$-dimensional vector whose entries are coprime polynomials in $\F_q[t]$. Then, the $(d+1)$-dimensional hybrid sequence $(\mathbf{H}_n)_{n\ge0}=\left(K_n(\Theta),V_n(\textbf{B})\right)_{n\ge0}$ satisfies \begin{equation}
    D_{N,\mathbf{H}} \ll \frac{\log^{d+1}(N)}{\sqrt{N}}\cdot
\end{equation}\end{theorem}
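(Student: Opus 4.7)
The plan is to deduce the bound from the Erdős--Turán--Koksma inequality (adapted to the digital/function field setting) combined with bad‐approximability estimates, mirroring the strategy used in \cite{bad} for the hybrid Halton--Kronecker construction. First I would fix an integer $M\ge 1$ (to be optimised at the end, essentially $M \asymp \sqrt{N}$) and invoke the Koksma--Hlawka-type inequality to bound
\[
D_{N,\mathbf{H}} \;\ll_d\; \frac{1}{M} \;+\; \frac{1}{N}\sum_{\mathbf{0}\neq \mathbf{h}\in \Z^{d+1},\, \|\mathbf{h}\|_\infty\le M}\frac{1}{r(\mathbf{h})}\left|\sum_{n=1}^{N} e^{2\pi i \langle \mathbf{h},\mathbf{H}_n\rangle}\right|,
\]
where $r(\mathbf{h})=\prod_j \max(1,|h_j|)$. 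This reduces everything to estimating hybrid exponential sums indexed by $\mathbf{h}=(h_0,\mathbf{h}')$, the first coordinate coupling to the Kronecker component and the remaining $d$ coupling to the Halton component.

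Next I would split such a sum according to its two pieces. Because $K_n(\Theta)$ depends only on the digit expansion of $n$ in base $q$ and is, up to a unit, an arithmetic progression with step $\Theta(t)$ evaluated at $t=p$, the factor depending on $\Theta$ reduces (after standard manipulations in $\F_q((t^{-1}))$) to a partial sum of the form $\sum_n \chi(h_0 N(t)\Theta(t))$, which can be bounded via summation by parts and the geometric sum formula once one controls $|\langle h_0 N(t)\Theta(t)\rangle|$ from below. This is precisely where the hypothesis $\Theta\in\textbf{Bad}(q)$ enters: it yields a uniform lower bound $|\langle N(t)\Theta(t)\rangle|\gg|N(t)|^{-1}$, which in turn gives an upper bound of order $\log N$ on the $\Theta$-part of the exponential sum. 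Simultaneously, the Halton part is controlled using the known fact that $\left(V_n(\mathbf{B})\right)$ forms a $(T,d)$-sequence in base $q$ when the $B_i(t)$ are pairwise coprime; its exponential sums satisfy a bound of order $\log^d N$ via the standard Walsh-function / character analysis of digital nets from \cite{DigHal}.

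At this point the two components are still coupled through the common summation index. To decouple them I would apply the Cauchy--Schwarz inequality to the sum $\sum_{n\le N} e^{2\pi i h_0 K_n(\Theta)}\cdot e^{2\pi i \langle \mathbf{h}', V_n(\mathbf{B})\rangle}$, yielding a product of an $L^2$-norm on each side. This is the source of the $\sqrt{N}$ denominator: one of the two factors picks up a $\sqrt{N}$ loss, while the other factor is handled directly by the character sum bound, producing an overall contribution of order $\sqrt{N}\log^{d+1}N$ after summation over $\mathbf{h}$ with $\|\mathbf{h}\|_\infty \le M$ and use of the weights $r(\mathbf{h})^{-1}$. Optimising the truncation parameter $M$ against the error term $1/M$ then yields the claimed bound.

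The main obstacle I anticipate is bookkeeping the logarithmic factors correctly across the Cauchy--Schwarz step: the Halton part contributes $\log^d$, the Kronecker part contributes $\log$, and one has to verify that the Cauchy--Schwarz split does not inflate these to $\log^{d+2}$ or worse. A secondary technical point is justifying the passage from the function-field digital expansion to genuine real exponential sums, since the evaluation map $t\mapsto p$ is not a homomorphism of fields; this is handled (as in \cite{bad}) by grouping the summation $n=1,\dots,N$ into blocks of length $q^k$ on which the digital components behave linearly, and then summing the block contributions by parts.
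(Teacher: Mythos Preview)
The paper does not contain a proof of Theorem~\ref{bad_bound}; it is quoted as \cite[Theorem~2]{bad} and serves only as background motivating the paper's own main result, Theorem~\ref{main}. There is therefore nothing in this paper to compare your sketch against.

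Regarding the sketch on its own merits: the overall architecture (an Erd\H{o}s--Tur\'an--Koksma-type inequality, separate estimates for the two components, Cauchy--Schwarz to decouple, then optimise the cutoff) is a plausible template for producing a $\sqrt{N}$ denominator in hybrid discrepancy bounds. The main technical issue is the one you already flag: for \emph{digital} sequences the relevant characters are Walsh-type functions built from additive characters of $\F_q$, not the ordinary exponentials $e^{2\pi i\langle\cdot,\cdot\rangle}$, and the evaluation map $ev_q$ does not intertwine multiplication in $\F_q((t^{-1}))$ with multiplication in $\R$. Consequently the ``geometric sum formula'' and the lower bound $|\langle N(t)\Theta(t)\rangle|\gg |N(t)|^{-1}$ must be used entirely on the function-field side, and the Erd\H{o}s--Tur\'an--Koksma inequality you invoke must be its digital analogue rather than the classical real one. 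With those substitutions the outline is coherent; as literally written, several intermediate steps (the exponential sum over $n$, the summation-by-parts for the Kronecker part) do not make sense for digital sequences.
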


\noindent In 2022, Levin \cite{Levin5} suggests that any Laurent series in $\textbf{Bad}(t,q)$ could be used to create a low discrepancy two dimensional hybrid sequence. The main result of this paper confirms Levin's conjecture is correct: \begin{theorem}\label{main}
    Let $q$ be a power of a prime and $P(t)\in\F_q[t]$ be an irreducible polynomial. For any $\Theta(t)\in \textbf{Bad}(t,q)$, let $\Phi(t)\in\textbf{Bad}(P(t),q)$ be the Laurent series induced by Theorem \ref{induce}. Then, the hybrid sequence $(\mathbf{H}_n(\Phi,P))_{n\ge0}=\left(K_n(\Phi),V_n(P)\right)_{n\ge0}$ is low discrepancy in dimension 2.
\end{theorem}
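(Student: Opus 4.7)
The plan is to bound the star discrepancy of $\mathbf{H}_n(\Phi,P)=(K_n(\Phi),V_n(P))$ by a Fourier/character-sum approach adapted to the digital framework over $\F_q((t^{-1}))$. First I would apply a function-field analogue of the Erdős--Turán--Koksma inequality to reduce $D_N^*(\mathbf{H})$ to bounding, for a suitable cutoff $M = M(N)$, weighted sums of the form
\[
\frac{1}{N}\sum_{(H_1,H_2)\neq (0,0)} \frac{1}{r(H_1,H_2)}\left|\sum_{n=0}^{N-1} \chi\bigl(H_1\cdot n\Phi(t)\bigr)\,\psi\bigl(H_2\cdot V_n(P)\bigr)\right|,
\]
where $H_1,H_2$ run over polynomials of bounded degree and $r(\cdot,\cdot)$ is the usual ``hyperbolic'' weight. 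The tail contribution is controlled by choosing $\log_q M \asymp \log N$, so the discrepancy problem is transferred to estimating the inner character sum uniformly in $(H_1,H_2)$.

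Next I would exploit the digital structure of $V_n(P)$: writing $n$ in its $P(t)$-adic expansion $n \leftrightarrow \sum_{k\ge 0} a_k(n) P(t)^k$, the Van der Corput component becomes an explicit digit-reversal in base $P(t)$, while $K_n(\Phi)$ selects a finite block of coefficients from the Laurent expansion of $n\Phi(t)$. Summing over $n$ by decomposing according to these $P(t)$-adic digits makes the character in $V_n(P)$ split as a product indexed by $k$, while the character in $n\Phi(t)$ introduces, at level $k$, an additive shift by a term proportional to $P(t)^k\,\Phi(t)$. This is precisely the place where the induced form $\Phi(t)=\Theta(P(t))$ of Theorem~\ref{induce} is crucial: since $\Phi$ is already a Laurent series in $P(t)$, the shift by $P(t)^k$ acts cleanly on its digits, and the resulting sums can be collected as a single character sum against an expression of the form $N(t)\cdot P(t)^k\cdot \Phi(t)$ for some non-zero $N(t)\in\F_q[t]$ of controlled degree.

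At this point the defining property of $\textbf{Bad}(P(t),q)$ from \eqref{badpt} delivers a uniform lower bound
\[
N(t)\cdot \bigl|\langle N(t)\cdot P(t)^k\cdot \Phi(t)\rangle\bigr|\;\ge\;q^{-D(\Phi)},
\]
independent of $k$, which is exactly what is needed to geometric-series-sum the character sums in a way that does not blow up with the Van der Corput level. Plugging this back into the Erdős--Turán--Koksma bound and summing over $H_1,H_2$ yields the target estimate $D_N(\mathbf{H})\ll \log^2(N)/N$.

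The main obstacle is the middle step: rigorously matching the $P(t)$-adic digit structure of $V_n(P)$ with the shifts by $P(t)^k$ on $\Phi$, and verifying that every ``bad'' term arising in the double sum is indeed of the shape controlled by $\textbf{Bad}(P(t),q)$ rather than of some shape to which only the weaker $\textbf{Bad}(q)$ bound (as in Theorem~\ref{bad_bound}) applies. In particular, ensuring that the deficiency $D(\Phi)$ enters only through the multiplicative constant, and not through a factor that grows with $N$, is the delicate technical point that forces the use of the full $P$-adic Littlewood strength of $\Phi$ rather than mere badly-approximable behavior, and thereby upgrades the $\log^{d+1}(N)/\sqrt{N}$ rate of Theorem~\ref{bad_bound} to the conjecturally optimal $\log^2(N)/N$.
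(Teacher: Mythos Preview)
Your route is genuinely different from the paper's, and as written it has a gap that I do not see how to close.

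The paper does \emph{not} use Erd\H{o}s--Tur\'an--Koksma or any character-sum machinery. It proceeds by elementary box-splitting: an arbitrary anchored box $[0,\gamma)\times[0,\lambda)$ is decomposed into $O(\log^2 N)$ sub-boxes $I_{j_1,j_2}$ of the form $[a q^{-d(j_1+1)}, (a+1)q^{-d(j_1+1)})\times[b q^{-d(j_2+1)}, (b+1)q^{-d(j_2+1)})$. For each such box the number of $n<N$ with $\mathbf{H}_n\in I_{j_1,j_2}$ is counted directly. Lemma~\ref{mod} turns the Van der Corput constraint into a congruence $n(t)\equiv R(t)\pmod{P(t)^{j_2+1}}$, and Lemma~\ref{kronmat} turns the Kronecker constraint into a linear system $H_A(1,j_1,\cdot)\,\mathbf{n}=\mathbf{z}$ over $\F_q$, where $A=(a_i)$ is the coefficient sequence of $\Theta$ (not $\Phi$). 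The decisive step is then Lemma~\ref{rank}: because $\Theta\in\textbf{Bad}(t,q)$, every Hankel block $H_A(k,l,l+D(\Theta))$ has full rank, so the linear system has \emph{exactly} $q^{d(M-i-j_1-j_2-2)}$ solutions in the relevant range. That exact count, summed over the $O(\log^2 N)$ boxes, gives the $\log^2(N)/N$ bound.

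The gap in your sketch is the way the induced structure $\Phi=\Theta(P(t))$ enters. You say it is ``crucial'' so that multiplication by $P(t)^k$ ``acts cleanly on the digits'', but any $\Phi\in\F_q((t^{-1}))$ admits a $P(t)$-adic expansion on which multiplication by $P(t)^k$ is a clean shift; what is special about the induced form is that the digits $a_i$ lie in $\F_q$ rather than in $\F_q[t]_{<d}$, and that the resulting $\F_q$-valued Hankel matrices have controlled rank via Lemma~\ref{rank}. Your argument never uses this: after the shift you invoke only the metric inequality \eqref{badpt}, i.e.\ the bare $\textbf{Bad}(P(t),q)$ condition. If that sufficed, your argument would prove Conjecture~\ref{all}, which the paper leaves open. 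Moreover, the Erd\H{o}s--Tur\'an--Koksma route is exactly what Hofer uses to obtain Theorem~\ref{bad_bound}, and the $\sqrt{N}$ loss there is structural to the method (it arises from a Cauchy--Schwarz decoupling of the Kronecker and Van der Corput characters); you have not indicated how the stronger Diophantine input removes that loss rather than merely improving the constant. In short, the step ``geometric-series-sum the character sums to $O(1)$'' is precisely where the paper's Hankel full-rank counting does real work, and your proposal does not supply a substitute.
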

\noindent This provides the first known example of a low discrepancy hybrid sequence. Furthermore, the majority of literature on digital sequences is focused on showing that they have the similar properties as their real counterparts. Hence, if the set $\textbf{Bad}_p$ is empty (as it is conjectured to be), Theorem \ref{main} would provide a rare instance of a result in discrepancy theory that is obtainable for digital sequences and that does not have a non-digital analogue.\\

\noindent The next section defines the Kronecker and Van der Corput sequences, as well as their corresponding digital sequences. Section \ref{sect:proof} contains the proof of Theorem \ref{main} and Section \ref{sect:conj} contains some related open problems as well as some further connections between Diophantine approximation and low discrepancy sequences.

\subsubsection*{Acknowledgments}
\noindent The author is grateful to his supervisor Faustin Adiceam for his consistent support, supervision and advice throughout the duration of this project. Additionally, the author thanks Erez Nesharim and Agamemnon Zafeiropoulos for their discussions and careful proof reading following the 2024 conference `Diophantine Approximation, Fractal Geometry and Related Topics' in Paris. Finally, the author acknowledges the financial support of the Heilbronn Institute.

\section{Van der Corput and Kronecker Sequences}\label{sect:LD}
\noindent This section provides basic definitions for the low discrepancy sequences that are needed to prove Theorem \ref{main}. For further details, see \cite{ud}. The following three related definitions are used multiple times in this section, and hence they are included now. Recall that $q\in\N$ is a positive power of a prime $p$.
\begin{definition}\label{eval}
    Define the \textit{Evaluation Function}  \begin{equation}
        E_q:\F_q\to \{n\in\N: 0\le n <q\} ~~~\text{as}~~~ E_q:f(x)\mapsto f(p).
    \end{equation} It is clear that $E$ is a bijection.\end{definition}\noindent For the purposes of proving Theorem \ref{main}, any bijection from $\F_q$ to $\{n\in\N: 0\le n<q\}$ will suffice. Therefore, one does not need to be concerned by which irreducible polynomial is used in the definition of $\F_q$, as the Evaluation Function will always provide a bijection.\begin{definition} Define the \textit{Laurent Series Evaluation Function}  \begin{equation}
        ev_q: \F_q((t^{-1})) \to \R ~~~\text{as}~~~ ev_q\left(\sum_{i=-h}^{\infty}f_i(x)t^{-i}\right)\mapsto \sum_{i=-h}^\infty E_q(f_i(x))q^{-i}.\label{polyeval}
    \end{equation}
    \noindent When the domain is restricted to irrational Laurent series and polynomials, the Laurent Series Evaluation Function is also a bijection.
\end{definition}\begin{definition}\label{ass}
   Let $n=\sum_{i=0}^{\left\lfloor \log_q(n)\right\rfloor} a_iq^i$ be a natural number expanded in base $q$. Define \textit{polynomial in} $\F_q[t]$ \textit{associated to} $n$ as \[n(t):=\sum_{i=0}^{\left\lfloor \log_q(n)\right\rfloor}E_q^{-1}(a_i) t^i.\] 
\end{definition}

\noindent The low discrepancy sequences that were mentioned in the introduction are now defined. \\

\begin{definition}[Van der Corput Sequence] Let $b>1$ be a natural number. The \textbf{Base-$b$ Van Der Corput} sequence, denoted $\left(v_n(b)\right)_{n\ge1}$, is defined for a natural number $n$ expanded in base $b$ as $\sum_{i=0}^\infty n_ib^{i}$ as follows:  \begin{equation}
    v^{(b)}_n=\sum_{i=0}^\infty \frac{n_i}{b^{i+1}}.
\end{equation}
\end{definition}
\noindent Note that the sum is always finite. The higher dimensional generalisation of the Van der Corput sequence is known as the Halton sequence: let $d\ge2$ be an integer and let $\{b_i: 1\le i \le d\}$ be a set of $d$ coprime natural numbers. Let $(v_n(b_i))_{n\ge1}$ be the Van der Corput sequence for each base $b_i$. Then the $d$-dimensional Halton sequence $\left(\mathbf{v}_n(\mathbf{b})\right)_{n\ge1}$ in $\R^d$ is defined as $\left(\mathbf{v}_n(\mathbf{b})\right)_{n\ge1} =\left(v_n(b_1),\dots,v_n(b_d)\right)_{n\ge1}$. It was proven in \cite{Halton} that $\left(\mathbf{v}_n(\mathbf{b})\right)_{n\ge1}$ is a $d$-dimensional low discrepancy sequence.\\

\noindent \begin{definition}[Digital Van der Corput Sequence]
 Let $n$ be a natural number and let $n(t)\in\F_q[t]$ be the polynomial associated to it. For a given polynomial $B(t)\in\F_q[t]$,  expand $n(t)$ in base $B(t)$ as \begin{equation}n(t)=\sum_{i=0}^\infty b_i(t) B(t)^i\label{bi}\end{equation} where $\deg(b_i(t))<\deg(B(t))$. As $n(t)$ has finite degree, $b_i(t)=0$ for all $i>\frac{\deg(n(t))}{\deg(B(t))}$. Then, the \textbf{Digital Van der Corput} sequence associated to $B(t)$ is given by $\left(V_n(B)\right)_{n\ge0}:=\left(V_n(B(t))\right)_{n\ge0}$, where\begin{equation}\label{vandef}V_n(B)=\sum_{i=0}^\infty \frac{ev_q(b_i(t))}{|B(t)|^{i+1}}.\end{equation} 
 \end{definition} \noindent It was proved in \cite{DigHal} that this sequence has low discrepancy.\\

\noindent \begin{definition}[Kronecker Sequences] Let $\alpha\in\R$ be an irrational number and let $\{\alpha\}=\alpha-\lfloor \alpha\rfloor$ denote the fractional part of $\alpha$. Then, the \textbf{Kronecker sequence} is defined as $\left(k_n(\alpha)\right)_{n\ge0}:=\left(\{n\alpha\}\right)_{n\ge1}$.\end{definition}\noindent A classical result of Weyl \cite{Weyl} states that the sequence $\left(k_n(\alpha)\right)_{n\ge0}$ is uniformly distributed for every irrational $\alpha$. However, calculating the discrepancy of $\left(k_n(\alpha)\right)_{n\ge0}$ requires more nuanced information about $\alpha$. As stated in Theorem \ref{kron}, the additional constraint that $\alpha$ is badly approximable is necessary and sufficient to ensure the associated Kronecker sequence is low discrepancy. See \cite[Chapter 2.3]{ud} for more information on the behaviour of the discrepancy of $\left(k_n(\alpha)\right)_{n\ge0}$ for any given real number $\alpha$.\\

\begin{definition}[Digital Kronecker Sequences] For a given $\Theta(t)\in\F_q((t^{-1}))$, one defines function field analogue to the Kronecker sequence as \begin{equation}\left(\widetilde{K}_n(\Theta)\right)_{n\ge0}=\left(\widetilde{K}_n(\Theta(t))(t)\right)_{n\ge0}:=\left(\langle n(t)\cdot\Theta(t) \rangle\right)_{n\ge0}\label{tild}\end{equation} where $\langle\cdot\rangle$ denotes the fractional part\footnote{Recall, the fractional part of a Laurent series is the part only made up of negative powers of $t$.} of the input. Then, the \textbf{digital Kronecker sequence} derived from $\Theta(t)$ is given by \[\left(K_n(\Theta)\right)_{n\ge0}=\left(ev_q\left(\widetilde{K}_n(\Theta)\right)\right)_{n\ge0}.\]\end{definition}\noindent It was proved in \cite[4.48]{ud} that if $\Theta(t)\in\textbf{Bad}(q)$, then this sequence is low discrepancy.

\begin{definition}[Hybrid Sequences] A hybrid sequence in $d$ dimensions is a concatenation of $d$ different one dimensional low discrepancy sequences.\end{definition}\noindent The following hybrid sequence is the focus of this paper: for a given $\Theta(t)\in\F_q((t^{-1}))$ and $B(t)\in\F_q[t]$, define the hybrid sequence $$\mathbf{H}(\Theta,B):=(\mathbf{H}_n(\Theta,B))_{n\ge0}:=\left(K_n(\Theta), V_n(B)\right)_{n\ge0}.$$ Theorem \ref{main} states that if $P(t)\in\F_q[t]$ is an irreducible polynomial and $\Phi(t)\in\mathbf{Bad}(P(t),q)$ has been induced by applying Theorem \ref{induce} to some $\Theta(t)\in\textbf{Bad}(t,q)$, then $\mathbf{H}(\Phi,P)$ is low discrepancy.

\section{Proof that $\mathbf{H(\Phi,P)}$ has Low Discrepancy}\label{sect:proof}
\subsection{Prerequisites}
\noindent This subsection contains three lemmas that are required for the proof of Theorem \ref{main}. The first, from \cite[Page 6]{bad}, concerns properties of the Van der Corput sequence. As neither a proof nor a citation is provided in \cite{bad}, the proof is given below.
\begin{lemma}\label{mod}
    Let $p$ be a prime, and $P(t)$ be an irreducible polynomial in $\F_q[t]$ of degree $d$. Furthermore, let $\left(V_n(P)\right)_{n\ge0}$ be the digital Van der Corput sequence in base $P(t)$ as defined in (\ref{vandef}), and let $l$ and $v$ be natural numbers satisfying $v=\sum_{i=0}^{dl}v_iq^i<q^{dl}$. Then, the following are equivalent: \begin{enumerate}
        \item The $n^\nth$ entry of the digital Van der Corput sequence satisfies \begin{equation}\label{lemm0}V_n(P)\in\left[\frac{v}{q^{d l}},\frac{v+1}{q^{d l}}\right).\end{equation}
        \item Define \[R(t):=\sum_{i=0}^{l-1}\left(\sum_{j=0}^{d-1}E_q^{-1}(v_{d(l-i)+j})t^j\right)P(t)^i.\] Then,
     \begin{equation}\label{lemm1}n(t)\equiv R(t) \mod P(t)^l. \end{equation} 
\end{enumerate}
\end{lemma}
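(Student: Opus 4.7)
\noindent The plan is to trace through both conditions directly from the definition of the digital Van der Corput sequence and the base-$P(t)$ expansion of $n(t)$, and show that they record exactly the same data.

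\noindent Starting from the expansion $n(t)=\sum_{i\ge 0} b_i(t)P(t)^i$ with $\deg b_i(t) < d$, one has $0\le ev_q(b_i(t)) < q^d$. Substituting into (\ref{vandef}), each summand $ev_q(b_i(t))/|P(t)|^{i+1}$ contributes a block of exactly $d$ base-$q$ digits to the expansion of $V_n(P)$, occupying positions $di+1$ through $d(i+1)$ after the radix point; crucially, the blocks coming from different $i$ do not overlap. Consequently, the first $dl$ base-$q$ digits of $V_n(P)$ are completely determined by (and in turn determine) the polynomials $b_0(t),\ldots,b_{l-1}(t)$.

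\noindent Next, the condition $V_n(P)\in[v/q^{dl},(v+1)/q^{dl})$ is simply the statement that these first $dl$ base-$q$ digits read off the base-$q$ expansion of $v$. Matching the digit at each position to the corresponding $v_k$, and applying $E_q^{-1}$ to convert each base-$q$ digit back to a coefficient of some $t^j$ inside the corresponding $b_i(t)$, reassembles exactly the polynomial $R(t)$ appearing in the statement. Finally, since $\deg b_i(t) < d$ for every $i$, the truncation $\sum_{i=0}^{l-1}b_i(t)P(t)^i$ is the unique polynomial of degree less than $dl$ congruent to $n(t)$ modulo $P(t)^l$; hence specifying $b_0(t),\ldots,b_{l-1}(t)$ is the same as specifying $n(t)\bmod P(t)^l$, which gives the equivalence of conditions (1) and (2).

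\noindent There is no conceptual obstacle: the only real work is careful bookkeeping of how the single index $k$ of $v_k$ in $v=\sum v_k q^k$ corresponds to the pair $(i,j)$ indexing the $j$-th base-$q$ digit of the $i$-th block of $V_n(P)$. Once the explicit correspondence between these indexings is pinned down, the formula for $R(t)$ in the statement falls out directly, and the lemma follows.
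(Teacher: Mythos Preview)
Your proposal is correct and follows essentially the same approach as the paper: both arguments unpack the base-$q$ expansion of $V_n(P)$, observe that each $b_i(t)$ contributes a non-overlapping block of $d$ digits, and then match the first $dl$ digits against the base-$q$ digits of $v$ to recover $b_0(t),\ldots,b_{l-1}(t)$ and hence $n(t)\bmod P(t)^l$. The paper carries out the index bookkeeping explicitly (writing $c_i=v_{dl-i}$ and solving for each $b_i(t)$), whereas you describe the structure and defer the final index-matching to the reader, but the underlying argument is the same.
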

\begin{proof}
    \noindent On the one hand, expand $V_n(P)$ in base $q$ as $$\sum_{i=1}^\infty c_i q^{-i}=\sum_{i=1}^\infty \left(\sum_{j=0}^{d-1}c_{di-j}q^j \right)q^{-di}$$ for $c_i\in\N$ satisfying $0\le c_i<q$. Additionally, expand $n(t)$ as in (\ref{bi}) with $B(t)=P(t)$. From the definition of the digital Van der Corput sequence (see (\ref{vandef}), one has that \begin{equation}\label{lem11}ev_q(b_{i-1}(t))=\sum_{j=0}^{d-1}c_{di-j}q^j.\end{equation} 
    
    \noindent On the other hand, equation (\ref{lemm0}) is equivalent to \[\sum_{i=0}^{dl}v_i q^{-d l+i} \le V^{(P)}_n < \sum_{i=0}^{dl}v_i q^{-d l+i} + q^{-d l},\] which is true if and only if $c_i=v_{d l -i}$.\\
    
    \noindent Hence, equation (\ref{lemm0}) holds if and only if $b_i(t)=\sum_{j=0}^{d-1}E_q^{-1}(v_{d(l-i)+j})t^j$ for $0\le i <l$, and so \[n(t)\equiv R(t):=\sum_{i=0}^{l-1}\left(\sum_{j=0}^{d-1}E_q^{-1}(v_{d(l-i)+j})t^j\right)P(t)^i\mod P(t)^l.\]

\end{proof} 

\noindent The following definition is needed to state the two remaining lemmas.\begin{definition} \label{hank}
A matrix $(h_{i,j})$ for $1\le i\le n, 1\le j \le m$ is called \textbf{Hankel} if all the entries on an anti-diagonal are equal. That is, $h_{i,j}=h_{i+1,j-1}$ for any $i,j\in\mathbb{N}$ such that this entry is defined. Additionally, let $k\in\N$. If $A=(a_i)_{i\in\N}$ is a sequence, then denote the Hankel matrix of size $(m+1)\times(n+1)$ whose entries are $h_{i,j}=a_{i+j+k}$ as $H_A(k,m,n)$
\end{definition} 
\noindent The next lemma, from \cite[Theorem 2.2]{Faustin} states an equivalence between a Laurent series being in $\textbf{Bad}(t,q)$ and a property of Hankel matrices.
\begin{lemma}\label{rank}
    Let $\Theta(t)=\sum_{i=1}^\infty a_i t^{-i}\in\F_q((t^{-1}))$ be a Laurent series whose coefficients are given by the sequence $A=(a_i)_{i\in\N}$. Then, $\Theta(t)\in\textbf{Bad}(t,q)$ with deficiency $D(\Theta)$ if and only if for any positive $k,l\in\N$, the Hankel matrix $H_A(k,l,l+D(\Theta))$ has full rank over $\F_q$. 
\end{lemma}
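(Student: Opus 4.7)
My plan is to translate the Diophantine condition defining membership in $\textbf{Bad}(t,q)$ into a linear-algebraic condition on the Hankel matrices built from the coefficient sequence $A=(a_i)$, and then verify the resulting equivalence in both directions.

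First, I would unpack the definition at the level of coefficients. Writing $N(t)=\sum_{j=0}^{l-1} n_j t^j$, a direct expansion shows that the coefficient of $t^{-i}$ in $N(t)\,t^k\,\Theta(t)$, for $i\ge 1$, equals
\[
c_i \;=\; \sum_{j=0}^{l-1} n_j\, a_{i+j+k}.
\]
Since $|N(t)|=q^{\deg N}$ and $|\langle N t^k\Theta\rangle|=q^{-i_0}$, where $i_0=\min\{i\ge 1:c_i\neq 0\}$, the Diophantine inequality $|N(t)|\cdot|\langle N(t) t^k \Theta(t)\rangle|\ge q^{-D(\Theta)}$ is equivalent to the assertion that the tuple $(c_1,c_2,\dots,c_{\deg N+D(\Theta)})$ contains at least one nonzero entry.

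Second, I would encode this as a matrix statement. The linear map $(n_0,\dots,n_{l-1})\mapsto (c_1,\dots,c_{l+D(\Theta)-1})$ is given by a rectangular Hankel matrix whose entries are of the form $a_{i+j+k}$. The condition that no nonzero vector $n$ lies in its kernel is equivalent to this matrix having full column rank, and because the transpose of a Hankel matrix is again a Hankel matrix with rows and columns swapped, this transposes into a full row rank condition on a Hankel matrix of the form $H_A(k,l,l+D(\Theta))$, matching the lemma after the cosmetic reindexing $k\mapsto k+1$ and $l\mapsto l-1$.

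Third, both implications follow from this dictionary. For the forward direction, if some $H_A(k,l,l+D(\Theta))$ failed to have full rank, one could read off a nonzero polynomial $N(t)$ and a shift $k$ producing $|N(t)|\cdot|\langle N(t)t^k\Theta(t)\rangle|<q^{-D(\Theta)}$, contradicting the defining deficiency of $\Theta(t)$. For the reverse direction, the rank hypothesis directly yields $|N(t)|\cdot|\langle N(t)t^k\Theta(t)\rangle|\ge q^{-D(\Theta)}$ for every nonzero $N(t)$ and every $k\ge 0$, so the infimum defining $D(\Theta)$ is at most $q^{-D(\Theta)}$. Sharpness -- that $D(\Theta)$ is exactly the deficiency rather than merely an upper bound -- follows because the infimum in (\ref{badpt}) is attained in this discrete function-field setting, which translates back into a Hankel matrix with one fewer column failing to have full rank at some $(k,l)$.

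The main obstacle I anticipate is the delicate index book-keeping rather than any conceptual hurdle: the size $(l+1)\times(l+D(\Theta)+1)$ of $H_A(k,l,l+D(\Theta))$ must be reconciled exactly with the condition "$(c_1,\dots,c_{\deg N+D(\Theta)})\neq 0$" across all admissible pairs $(k,N)$, and small off-by-one shifts between $\deg N$ and $l$, or between $k\ge 0$ and $k\ge 1$, are easy to mismanage. Once the indices are aligned, the substance of the lemma is the standard correspondence between linear independence of the columns of a structured matrix and the Diophantine property of the underlying sequence.
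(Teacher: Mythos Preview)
The paper does not supply its own proof of this lemma: it is quoted from \cite[Theorem~2.2]{Faustin} and used as a black box, so there is nothing in the present paper to compare your argument against.

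On its own merits, your plan is correct and is the natural route. Expanding $N(t)t^k\Theta(t)$ term by term to identify the coefficient of $t^{-i}$ as the linear form $\sum_j n_j\,a_{i+j+k}$, converting the ultrametric inequality $|N|\cdot|\langle N t^k\Theta\rangle|\ge q^{-D(\Theta)}$ into the statement that the vector $(c_1,\dots,c_{\deg N+D(\Theta)})$ is nonzero, and then reading this as a trivial-kernel/full-rank condition on the corresponding Hankel map, is exactly how this equivalence is obtained. Your remark that the transpose of a Hankel matrix is again Hankel (with rows and columns swapped) is the right device to pass between full column rank of the coefficient map and full row rank of $H_A(k,l,l+D(\Theta))$. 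One small point worth making explicit in the forward direction: a nonzero kernel vector for the $(l+1)$-column map might have top entry zero, so $\deg N<l$; but then $(c_1,\dots,c_{\deg N+D(\Theta)})$ is a subvector of the vanishing $(c_1,\dots,c_{l+D(\Theta)})$, so the Diophantine violation still follows. Your own caveat about the index bookkeeping (aligning $\deg N$ with $l$, and $k\ge 0$ with $k\ge 1$) is well placed---that alignment is essentially the entire content of the proof, and your sketch handles it correctly up to the cosmetic shifts you already flag.
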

\noindent Much like Lemma \ref{mod}, the following lemma rephrases what it means for an entry in a digital Kronecker sequence to be in a given interval.
\begin{lemma}\label{kronmat}
    Let $P(t)\in\F_q[t]$ be an irreducible polynomial of degree $d$ and let $\Theta(t)=\sum_{i=0}^\infty a_i P(t)^{-i}\in\F_q((t^{-1}))$ be a Laurent series. Furthermore, let $n,k\in\N$, define $m=\lfloor\log_q(n)\rfloor$ and expand the associated polynomial $n(t)$ in base $P(t)$ as $n(t)=\sum_{i=0}^{\infty}n_i(t)P(t)^i$. Finally, define the sequence $A=(a_i)_{i\ge1}$. Then, the $n^\nth$ term of the digital Kronecker sequence $(K_n(\Theta))_{n\ge0}$ satisfies \begin{equation}\label{kronran}K_n^\Theta\in\left[\frac{k}{q^{dl}}, \frac{k+1}{q^{dl}}\right]\end{equation} if and only if there exists some fixed $l$-dimensional vector $\mathbf{z}$ whose entries are in $\F_q[t]$ with degree less than $d$ such that \begin{equation}
        H_A(1,l-1,m)\left(\begin{array}{c}
            n_0(t)\\\vdots\\n_{m}(t)
        \end{array}\right)=\mathbf{z}.
    \label{matrixl}\end{equation} Above, the precise value of $\textbf{z}$ depends only on $k$.
\end{lemma}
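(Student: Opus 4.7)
The plan is to show that both the interval membership condition on $K_n(\Theta)$ and the matrix equation (\ref{matrixl}) are two reformulations of the same system of linear equations on the digits $n_0(t), \ldots, n_m(t)$. Since $K_n(\Theta) = ev_q(\widetilde K_n(\Theta))$ and $ev_q$ acts on a Laurent series in $t^{-1}$ by mapping each coefficient via the bijection $E_q$ to the corresponding base-$q$ digit of a real number, membership in $[k/q^{dl}, (k+1)/q^{dl}]$ is equivalent to prescribing the first $dl$ Laurent coefficients of $\widetilde K_n(\Theta) \in \F_q((t^{-1}))$ to values read off from the base-$q$ expansion of $k$. I would therefore compute these coefficients directly in terms of the $n_i(t)$ and the $a_j$, and then recognise the resulting linear system as (\ref{matrixl}).

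First I would expand the product using the $P(t)$-adic form of $\Theta(t)$ and of $n(t)$ to obtain
\[
n(t)\cdot\Theta(t) \;=\; \sum_{i=0}^{m}\sum_{j=0}^{\infty} n_i(t)\,a_j\, P(t)^{i-j}.
\]
Re-indexing by $\ell = j-i$ and grouping terms yields
\[
n(t)\cdot\Theta(t) \;=\; \sum_{\ell \ge -m} b_\ell(t)\,P(t)^{-\ell}, \qquad b_\ell(t) \;=\; \sum_{i=0}^{m} n_i(t)\,a_{i+\ell} \quad (\ell\ge 0).
\]
Because each $n_i(t)$ has degree less than $d$ and the $a_j$ are constants in $\F_q$, the polynomial $b_\ell(t)$ has degree less than $d$ for every $\ell\ge 0$, so this is already the canonical $P(t)$-adic expansion of the product. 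For $\ell \ge 1$ the term $b_\ell(t)P(t)^{-\ell}$ has $t$-degree at most $-d(\ell-1)-1$ and thus lies in the fractional part, while for $\ell \le 0$ the term is a polynomial in $t$. Hence $\widetilde K_n(\Theta) = \sum_{\ell\ge 1} b_\ell(t)\,P(t)^{-\ell}$.

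Next I would observe that for $1\le \ell \le l$ the summand $b_\ell(t)P(t)^{-\ell}$ contributes exclusively to the coefficients of $t^{-i}$ with $d(\ell-1)+1 \le i \le d\ell$, while each $b_\ell$ with $\ell > l$ affects only coefficients of $t^{-i}$ with $i > dl$. The first $dl$ coefficients of $\widetilde K_n(\Theta)$ thus decompose into $l$ blocks of $d$ entries, each block determined by (and determining) one of $b_1(t), \ldots, b_l(t)$. By the coefficient-wise behaviour of $ev_q$, membership $K_n(\Theta)\in[k/q^{dl},(k+1)/q^{dl}]$ is equivalent to prescribing $b_1(t),\ldots, b_l(t)$ as specific polynomials $z_1(t),\ldots, z_l(t)$ of degree less than $d$, whose coefficients depend only on the base-$q$ digits of $k$. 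Writing $\sum_{i=0}^{m} n_i(t)\,a_{i+\ell} = z_\ell(t)$ for $\ell = 1,\ldots,l$ in matrix form produces a coefficient matrix whose $(\ell,i)$-entry is $a_{i+\ell}$; after shifting the row index by one this is exactly $H_A(1,l-1,m)$ in the notation of Definition \ref{hank}, and with $\mathbf{z}=(z_1(t),\ldots,z_l(t))^{T}$ this is (\ref{matrixl}).

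The main obstacle is purely bookkeeping: carefully intertwining three expansions, namely the base-$q$ expansion of the real number $K_n(\Theta)$, the $t^{-1}$-adic Laurent expansion of $\widetilde K_n(\Theta)$, and the $P(t)^{-1}$-adic expansion in which the Hankel structure is visible. Verifying that a block of $d$ consecutive $t^{-1}$-coefficients corresponds to exactly one $P(t)^{-1}$-digit $b_\ell(t)$, and that $ev_q$ converts these blocks faithfully into the base-$q$ digits of $k$, is the only technical point. Once this dictionary is in place no rank or independence argument is needed, and the lemma reduces to identifying two presentations of a single linear system in $(n_0(t),\ldots,n_m(t))$.
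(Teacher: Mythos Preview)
Your argument is essentially the paper's own: both compute $\widetilde K_n(\Theta)=\sum_{\ell\ge 1}\bigl(\sum_{i=0}^{m} a_{i+\ell}\,n_i(t)\bigr)P(t)^{-\ell}$, observe that the interval condition on $K_n(\Theta)$ fixes the first $l$ of these $P$-adic digits, and read off the Hankel system~(\ref{matrixl}).

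One slip worth correcting: it is not true that $b_\ell(t)P(t)^{-\ell}$ contributes \emph{exclusively} to the $t^{-i}$-coefficients with $d(\ell-1)+1\le i\le d\ell$. Unless $P(t)=t$, the Laurent series $P(t)^{-\ell}$ has infinitely many nonzero $t^{-1}$-coefficients, so $b_\ell(t)P(t)^{-\ell}$ contributes to every $t^{-i}$ with $i\ge d(\ell-1)+1$. What is true (and what you actually need) is that the linear map sending $(b_1,\dots,b_l)$ to the first $dl$ coefficients of $t^{-1}$ in $\sum_{\ell\ge 1} b_\ell P^{-\ell}$ is block lower-triangular with invertible diagonal blocks (the diagonal block for $b_\ell$ comes from the leading $d$ coefficients of $P(t)^{-\ell}$), hence a bijection; combined with your correct observation that $b_\ell$ for $\ell>l$ does not touch these first $dl$ coefficients, this gives the desired equivalence. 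The paper's proof glosses over this passage between the $t^{-1}$-adic and $P(t)^{-1}$-adic expansions entirely, so with this fix your write-up is, if anything, more careful than the original.
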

\begin{proof}
    \noindent Recall $\widetilde{K}_n(\Theta)$ from (\ref{tild}). Expanding $K_n(\Theta)$ as \[K_n(\Theta)=\sum_{i=1}^\infty q^{-di}\sum_{j=0}^{d-1}k_{i,j}q^j,\] one has that \[\widetilde{K}_n(\Theta)=\sum_{i=1}^\infty P(t)^{-i}\sum_{j=0}^{d-1}E_q^{-1}(k_{i,j})t^j.\] The range (\ref{kronran}) implies that $k_{i,j}$ is fixed for all $i\le l$ and all $0\le j \le d-1$, as $|P(t)|^{-i}=q^{-di}$. On the other hand, let $n(t)\in\F_q[t]$ be the polynomial associated to $n$. The Laurent series $\widetilde{K}_n(\Theta)$ is defined as \[\widetilde{K}_n(\Theta)=\langle \Theta(t)\cdot n(t) \rangle=\sum_{i=1}^\infty \left(\sum_{j=0}^{m} a_{i+j}n_j(t)\right)P(t)^{-i}.\] Note that this is precisely the matrix product seen in (\ref{matrixl}). Hence, the sum $\sum_{j=0}^ma_{i+j}n_j(t)=\sum_{j=0}^{d-1}E_q^{-1}(k_{i,j})t^j$, concluding the proof.
    
\end{proof}
\subsection{Proof of Theorem \ref{main}}
\begin{proof}
    \noindent Let $d:=\deg(P(t))$ and let $p$ be the prime such that $q$ is a power of $p$. Furthermore, define $M:=\lfloor \log_{q^d}(N)\rfloor$ and let $$\gamma=\sum_{i=1}^\infty \gamma_i q^{-d i}~~~\text{and}~~~\lambda=\sum_{i=1}^\infty \lambda_i q^{-d i}$$ be real numbers in $[0,1)$, with natural numbers $\gamma_i,\lambda_i<q^d$. Define the interval $S:=[0,\gamma)\times[0,\lambda)$ and for any positive natural number $j$ define  $$\Gamma_j:=\sum_{i=1}^j\gamma_iq^{-d i} ~~~\text{and}~~~\Lambda_j:=\sum_{i=1}^j\lambda_iq^{-d i}.$$ For positive natural numbers $j_1,j_2$, define $$I_{j_1,j_2}:=[\Gamma_{j_1},\Gamma_{j_1+1})\times[\Lambda_{j_2},\Lambda_{j_2+1}).$$ The intervals $I_{j_1,j_2}$ are pairwise disjoint and it is clear that \begin{equation}\label{disjoint}S=\bigsqcup_{j_1,j_2\in\N} I_{j_1,j_2}.\end{equation} Clearly, bounding the values of $j_1$ and $j_2$ in (\ref{disjoint}) will produce a set that is strictly contained in $S$. Indeed, define the set \[S':=\bigsqcup_{j_1,j_2\le M}I_{j_1,j_2}\subset S.\]
    \noindent Recall the deficiency $D(\Theta)$ of a Laurent series $\Theta(t)$ from (\ref{badpt}). The union of boxes $S'$ is split into two disjoint parts: \begin{align*}
    S_1&= \bigsqcup_{\substack{j_1,j_2\le M\\ j_1+j_2+2\le M-D(\Theta)}} I_{j_1,j_2}\\
    S_2&= \bigsqcup_{\substack{j_1,j_2\le M\\ j_1+j_2+2> M-D(\Theta)}} I_{j_1,j_2}\end{align*} 
    \noindent The proof relies on covering $S$ in a disjoint union of finitely many boxes. Therefore, small boxes are `glued' on to the edge of $S'$ to create a larger set $S''$ that contains $S$. Specifically, define $S''$ as the union of $S_1$, $S_2$, and the three following disjoint sets \begin{align*}
    S_3 &= \bigsqcup_{j_1<M} [\Gamma_{j_1},\Gamma_{j_1+1})\times\left[\Lambda_{M+1},\Lambda_{M+1}+\lambda_{M+1}q^{-d(M+1)}\right)\\
    S_4 &= \bigsqcup_{j_2<M} \left[\Gamma_{M+1},\Gamma_{M+1}+\gamma_{M+1}q^{-d(M+1)}\right)\times[\Lambda_{j_2},\Lambda_{j_2+1})\\
    S_5 &= \left[\Gamma_{M+1},\Gamma_{M+1}+\gamma_{M+1}q^{-d(M+1)}\right)\times\left[\Lambda_{M+1},\Lambda_{M+1}+\lambda_{M+1}q^{-d(M+1)}\right)
    \end{align*} \begin{figure}[H]
        \centering
        \includegraphics[scale=0.15]{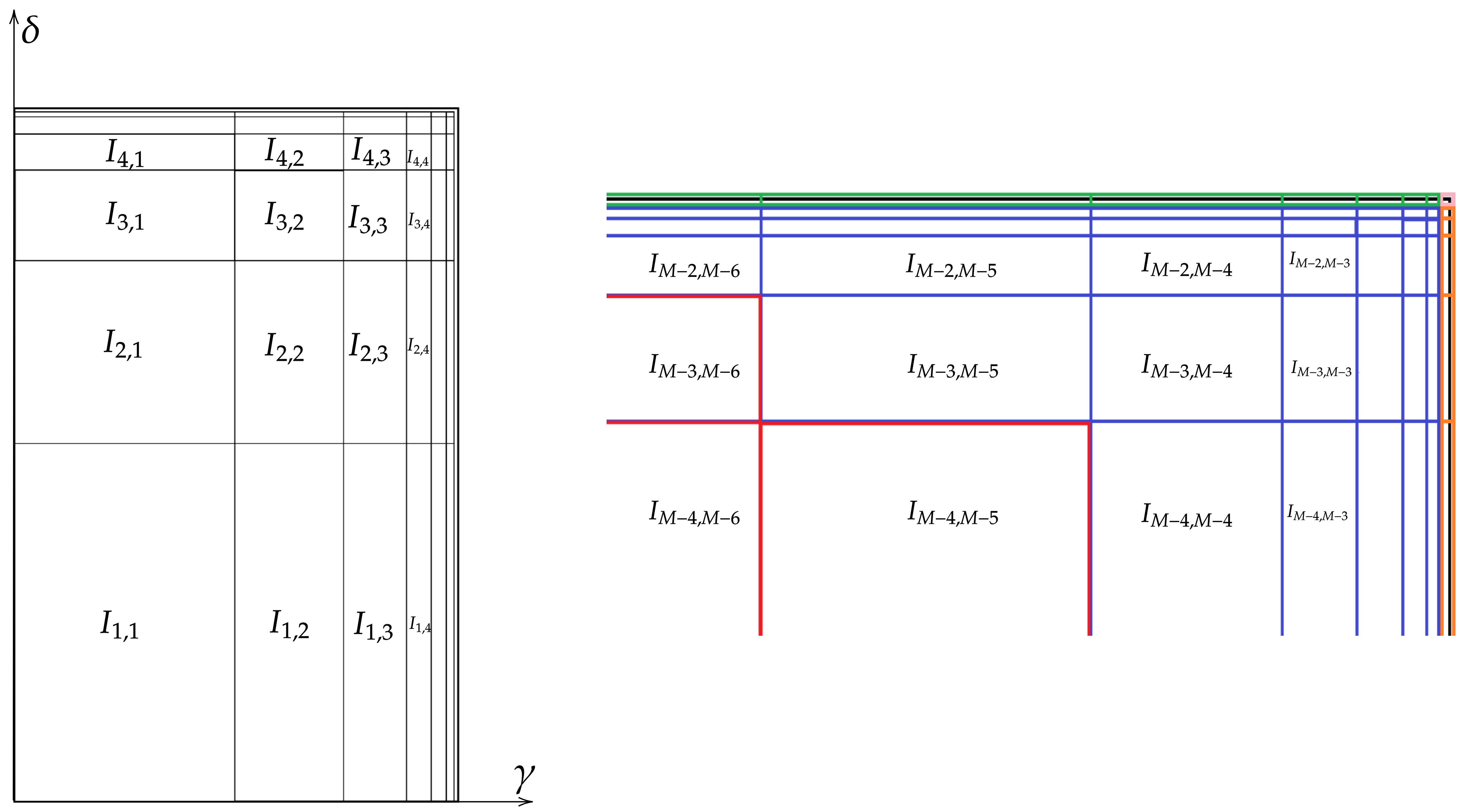}
        \caption{Left: The set $S$ split into boxes. Right: A zoomed in image of the top right corner of $S$, which is drawn in black. The red rectangles refer to the boxes $I_{j_1,j_2}$ in $S_1$, and the blue rectangles are the boxes in $S_2$. The union of the red and blue rectangles is $S'$. The sets $S_3$, $S_4$ and $S_5$ are coloured in green, orange and pink respectively.}
        \label{fig:enter-label}
    \end{figure} 
    \noindent  Any box $I_{j_1,j_2}$ has width $\gamma_{j_1+1}q^{-d(j_1+1)}$ and height $\lambda_{j_2+1}q^{-d(j_2+1)}$. As both $\gamma_{j_1+1}$ and $\lambda_{j_2+1}$ are less than $q^d$, $I_{j_1,j_2}$ is the disjoint union of at most $q^{2d}$ boxes of the form \begin{equation}I_1\times I_2:=\left[\frac{a}{q^{d(j_1+1)}}, \frac{a+1}{q^{d(j_1+1)}}\right)\times\left[\frac{b}{q^{d(j_2+1)}}, \frac{b+1}{q^{d(j_2+1)}}\right)\label{interval}\end{equation} for some $a<q^{d(j_1+1)}$ and $b<q^{d(j_2+1)}$. The goal is to count how many entries of $(\mathbf{H}_n(\Phi,P))_{n< N}$ fall inside $I_1\times I_2$. \\
    
    \noindent To this end, expand $N$ as \[N=\sum_{i=0}^{M} N_iq^{d i}\] where $N_i<q^d$. For $i>M$, define $N_i=0$. Similarly, expand $$\Theta(t)=\sum_{i=1}^\infty a_i t^{-i}, ~~~\text{and hence by definition,}~~~\Phi(t)=\sum_{i=1}^\infty a_i P(t)^{-i}$$ with $a_i\in\F_q$. Let $n$ be a natural number and recall Definition \ref{ass} to find the polynomial $n(t)\in\F_q[t]$ associated to $n$. Expand this polynomial in base $P(t)$ as \[n(t)=\sum_{i=0}^\infty n_i(t) P(t)^i,\] where $n_i(t)\in\F_q[t]$ satisfies $\deg(n_i(t))<d$ for all $i$ and $n_i(t)=0$ for all $i>\left\lfloor\frac{\deg(n(t))}{d}\right\rfloor$. Then, by Lemma \ref{kronmat}, $n$ being such that $K_n(\Theta)\in I_1$ is equivalent to finding solutions to the following Hankel matrix equation: \begin{equation}
        \left(\begin{array}{ccccccc}
            a_1 & \dots & a_{j_2+1} & a_{j_2+2} & \dots & a_{M+1} & \dots\\
            \vdots & & \vdots & \vdots & & \vdots & \\
            a_{j_1+1} & \dots & a_{j_1+j_2+1} & a_{j_1+j_2+2}
            & \dots & a_{M+j_1} & \dots
        \end{array}\right)\left(\begin{array}{c}
            n_0(t)\\\vdots\\n_{j_2}(t) \\n_{j_2+1}(t)\\\vdots\\n_{M}(t)\\ \vdots
        \end{array}\right)=\mathbf{z}.
    \label{matrix}\end{equation}
    \noindent Above, $\mathbf{z}$ is some fixed $(j_1+1)$-dimensional vector whose entries are polynomials in $\F_q[t]$ with degree less than $d$. The precise value of $\textbf{z}$ depends on $a$ from (\ref{interval}). Such a vector $\mathbf{z}$ exists because $n<N$ is bounded, so only solutions satisfying $n_i(t)=0$ for $i>M$ are of interest. Additionally, $n<N$ means one does not have complete freedom when choosing the values of $n_i$. That is, counting the solutions to (\ref{matrix}) naively will include some solution vectors that correspond to $n>N$. Therefore, a more precise counting strategy is implemented. \\
    
    \noindent The case where $I_{j_1,j_2}\subset S_1$ is dealt with first. Let \begin{equation}\label{irange}0\le i\le M-j_1-j_2-2-D(\Theta(t)).\end{equation} This range is non-empty by the assumption that $I_1\times I_2\subset I_{j_1,j_2}\subset S_1$. Define \[\mathcal{N}_i:=\sum_{c=0}^{i-1} N_{M-c}q^{d(M-c)}.\] Additionally, $\mathcal{N}_0:=0$. Let $K<N_{M-i}$ be a natural number and assume $n$ is in the range\begin{equation}\mathcal{N}_i+Kq^{d(M-i)}\le n <\mathcal{N}_i+(K+1)q^{d(M-i)}.
    \label{nrange}\end{equation} Then (\ref{matrix}) is rewritten as \begin{align}\nonumber
        \overbrace{\left(\begin{array}{ccc|ccc}
            a_1 & \dots & a_{j_2+1}& a_{M-i+1} & \dots & a_{M+1}\\
            \vdots & &\vdots&\vdots&&\vdots\\
            a_{j_1+1} & \dots & a_{j_1+j_2+1} & a_{M+j_1-i+1}&\dots& a_{M+j_1+1}
        \end{array}\right)\left(\begin{array}{c}
             n_0(t)\\\vdots\\n_{j_2}(t)\\\hline n_{M-i}(t)\\\vdots\\n_{M}(t)
        \end{array}\right)}^{:=\zeta}\\+\underbrace{\left(\begin{array}{ccc}
            a_{j_2+2} & \dots & a_{M-i} \\
            \vdots & &\vdots\\
            a_{j_2+j_1+2} & \dots & a_{M+j_1-i}
        \end{array}\right)}_{:=\Xi}\cdot\underbrace{\left(\begin{array}{c}
             n_{j_2+1}(t)\\\vdots\\n_{M-i-1}(t)
        \end{array}\right)}_{:=\textbf{x}}=  \mathbf{z}
    \label{matrix2}\end{align}
    \noindent Recall the Laurent Series Evaluation Function (\ref{polyeval}). By the range (\ref{nrange}), $E_q(n_c(t))=N_{c}$ for $M-i+1\le c \le M$ and $E_q(n_{M-i}(t))=K$. Furthermore, by Lemma \ref{mod}, $V_n(P)\in I_2$ implies the value of $n_c(t)$ is fixed for $0\le c \le j_2$. Therefore, the part of (\ref{matrix2}) labelled $\zeta$ is fixed. \\
    
    \noindent The difference between the number of columns and rows in the matrix $\Xi$ is $M-i-j_2-j_1-2$. By assumption that $I_1\times I_2$ is in $S_1$, \[M-i-j_1-j_2-2\ge D(\Theta)\] and hence by Lemma \ref{rank} the matrix $\Xi$ has full rank. Therefore, there are exactly $q^{d(M-i-j_1-j_2-2)}$ solutions to (\ref{matrix2}). Completing the same calculation $N_i$ times for all values of $i$ in range (\ref{irange}) shows there are at least \[\sum_{c=M}^{D(\Theta)+j_1+j_2+2}N_cq^{d(c-j_1-j_2-2)}\] values of $n<N$ such that $\mathbf{H}_n\in I_1\times I_2$. However, every value of $n$ in the range $\mathcal{N}_{M-j_1-j_2-1-D(\Theta)}\le n \le N$ is yet to be considered. That is, there are \[\sum_{c=0}^{D(\Theta)+j_1+j_2+1}N_{c}q^{d c}\] values of $n$ that are unaccounted for. Here, an over estimate is achieved by a naive count. \\
    
    \noindent In this case, $i=M-j_1-j_2-1-D(\Theta)$ and hence the matrix $\Xi$ has $j_1+D(\Theta(t))$ columns and $j_1+1$ rows. The submatrix comprised of the first $j_1$ rows has full rank, and therefore the nullity is at most $D(\Theta(t))$. \\
    
    \noindent The vector $\textbf{x}$ is split up as $\textbf{x}=\sum_{j=0}^{d-1}\textbf{x}_{j}t^j$, and similarly let $\textbf{z}=\sum_{j=0}^{d-1}\textbf{z}_{j}t^j$. Hence, the equation $\Xi \textbf{x}_{j}=\textbf{z}_j$ has at most $q^{D(\Theta)}$ solutions, which implies there are at most $q^{d D(\Theta(t))}$ solutions to (\ref{matrix2}). Once again, some of these solutions correspond to $n>N$. But as an over estimate, this suffices. \\
    
    \noindent Recall Definition \ref{discrepancy} and that $\mu_2$ denotes the two dimensional Lebesgue measure. Hence, \begin{align}|\#(I_1\times I_2,\mathbf{H}(\Phi,P),N)-N\mu_2(I_1\times I_2)|&=\left|\sum_{i=0}^{M-j_1-j_2-D(\Theta(t))-2}N_{M-i}(t)q^{d(M-j_1-j_2-2-i)}+q^{d D(\Theta(t))}-Nq^{-d(j_1+j_2+2)}\right|\nonumber\\&=q^{d D(\Theta(t))}.\nonumber\end{align}
    Applying the triangle inequality yields \[|\#(I_{j_1,j_2},\mathbf{H}(\Phi,P),N)-N\mu_2(I_{j_1,j_2})|\le  q^{d D(\Theta(t))+2d}.\]
    
    \noindent Now assume $I_{j_1,j_2}$ is inside $S_2$. As before, $I_{j_1,j_2}$ is the disjoint union of at most $q^{2d}$ intervals of the form $I_1\times I_2$, as seen in (\ref{interval}). By assumption, the following inequality is satisfied: \begin{equation}j_1+j_2+2>M-D(\Theta(t)).\label{S2}\end{equation} By the definition of $M$, this is equivalent to $$j_1+j_2+2\ge\log_{q^d}(N)-D(\Theta(t)).$$ Hence, \[N\mu_2(I_1\times I_2)=\frac{N}{q^{d(j_1+j_2+2)}}\le \frac{q^{d D(\Theta(t))}N}{q^{d\log_{q^d}(N)}}=q^{d D(\Theta(t))}.\] One is now let to calculate $\#(I_1\times I_2,\mathbf{H}(\Phi,P),N)$. Once again, an over estimate produced by a naive count will suffice. As there is no further restriction on $n<N$, the Hankel matrix $\Xi$ is now equal to $H_A(j_2+2,j_1,M-j_2-1)$. This matrix has $j_1+1$ rows and by (\ref{S2}) it has less than $j_1+D(\Theta(t))+1$ columns. Say the number of columns is $j_1+D(\Theta(t))+1-k$ for some natural number $k$. Then the first $j_1+1-k$ rows have full rank by the assumption that $\Theta(t)\in \textbf{Bad}(P(t),q)$. Therefore, the rank is always greater than $j_1+1-k$, meaning the nullity is less than or equal to $D(\Theta(t))$ and so there are at most $q^{d D(\Theta(t))}$ solutions to (\ref{matrix2}). Hence, $\#(I_1\times I_2,\mathbf{H}(\Phi,P),N)\le q^{d D(\Theta(t))}$ and so \[|\#(I_1\times I_2,\mathbf{H}(\Phi,P),N)-N\mu_2(I_1\times I_2)|\le q^{d D(\Theta(t))}\] as needed.\\

    \noindent Note that only the size of the intervals $I_1$ and $I_2$ are important in the case $I_{j_1,j_2}\subset S_2$. Hence, in the final case where the interval is in $S_3$, $S_4$ or $S_5$, the same proof as the $S_2$ case holds since these intervals are simply translations of $I_{j_1,j_2}$ for maximal values of $j_1$ or $j_2$.\\
    
    \noindent To conclude the proof, the interval $S$ has been split into $O(\log(N)^2)$ intervals $I_{j_1,j_2}$, each of which satisfies \[|\#(I_{j_1,j_2},\mathbf{H}(\Phi,P),N)-N\mu_2(I_{j_1,j_2})|\ll_{\Theta,q}1.\] Therefore, by the triangle inequality, $D^*_{N}(\mathbf{H})\ll_{\Theta(t),P(t)} \log^2(N)/N$.
\end{proof}
\section{Open Problems and Conjectures}\label{sect:conj}
\noindent The following conjecture is a natural continuation of the work presented in this paper: \begin{conjecture}\label{all}
    Let $P(t)\in\F_q[t]$ be an irreducible polynomial. Then the hybrid sequence $\left(K_n(\Theta), V_n(P(t))\right)$ generated from some $\Theta(t)\in\textbf{Bad}(P(t),q)$ is low discrepancy.
\end{conjecture}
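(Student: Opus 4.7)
The plan is to mirror the proof of Theorem \ref{main} as closely as possible and isolate precisely where the hypothesis that $\Phi$ is induced from some $\Theta\in\textbf{Bad}(t,q)$ is used, then to replace each such step with an argument that requires only $\Theta\in\textbf{Bad}(P(t),q)$. Inspecting the proof of Theorem \ref{main}, the single essential use of the inducing hypothesis is that the coefficients of $\Phi(t)=\sum_{i\ge1}a_iP(t)^{-i}$ lie in $\F_q$, which is why Lemma \ref{rank} applies verbatim to the Hankel matrix $\Xi$ appearing in equation (\ref{matrix2}) and yields full $\F_q$-rank together with the splitting $\mathbf{x}=\sum_{j=0}^{d-1}\mathbf{x}_jt^j$. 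For a general $\Theta(t)\in\textbf{Bad}(P(t),q)$, the natural $P(t)$-adic expansion $\Theta(t)=\sum_{i\ge1}a_i(t)P(t)^{-i}$ has coefficients $a_i(t)\in\F_q[t]$ of degree less than $d:=\deg P(t)$, which via the isomorphism $\F_q[t]/(P(t))\cong\F_{q^d}$ live in $\F_{q^d}$ rather than $\F_q$.

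The main step is therefore to establish the following analogue of Lemma \ref{rank}: a Laurent series $\Theta(t)\in\F_q((t^{-1}))$ with $P(t)$-adic expansion $\sum a_iP(t)^{-i}$ belongs to $\textbf{Bad}(P(t),q)$ with deficiency $D(\Theta)$ if and only if, for every positive $k$ and $l$, the Hankel matrix $H_A(k,l,l+D(\Theta))$ with entries in $\F_{q^d}$ has full rank over $\F_{q^d}$. The forward direction should follow by translating the defining infimum in (\ref{badpt}) into information about the $P(t)$-adic expansion: multiplication by $P(t)^k$ produces a shift in that expansion, and multiplication by $N(t)\in\F_q[t]$ followed by reduction modulo $P(t)$ corresponds, coefficient-by-coefficient, to an $\F_{q^d}$-linear combination of the shifted coefficient vectors. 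Hence small values of $|\langle N(t)P(t)^k\Theta(t)\rangle|$ force $\F_{q^d}$-linear dependencies among a contiguous block of coefficients of $\Theta$, contradicting full rank. The converse should follow by explicitly constructing a witness $N(t)$ from any linear dependency among Hankel rows, being careful to bound $|N(t)|$ in terms of the index of the dependency.

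With this characterization in hand, one replays the proof of Theorem \ref{main} almost verbatim. The partition of $S$ into $S_1,\dots,S_5$ and the reduction of the counting problem to the system $\Xi\mathbf{x}=\mathbf{z}$ survive unchanged, except that $\Xi$ now has entries in $\F_{q^d}$ and the system is viewed as $\F_{q^d}$-linear with $\mathbf{x},\mathbf{z}\in\F_{q^d}^\bullet$ (via the same $\F_q[t]/(P(t))\cong\F_{q^d}$ identification applied to each $n_c(t)$). Full $\F_{q^d}$-rank of $\Xi$ on $S_1$ yields exactly $q^{d(M-i-j_1-j_2-2)}$ solutions, matching the count in the original argument, while on $S_2$ and on the boundary strips $S_3,S_4,S_5$ the $\F_{q^d}$-nullity is at most $D(\Theta)$ and so at most $q^{dD(\Theta)}$ solutions occur. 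Summing the resulting $O(1)$ discrepancy contribution over the $O(\log^2N)$ boxes $I_{j_1,j_2}$ and applying the triangle inequality then gives $D_N^*(\mathbf{H})\ll_{\Theta,P}\log^2(N)/N$.

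The principal obstacle is the proof of the Hankel-matrix characterization of $\textbf{Bad}(P(t),q)$ itself. The argument of Adiceam--Nesharim--Lunnon underlying Lemma \ref{rank} is tailored to the case where the base of the expansion is the single indeterminate $t$, so the notions of norm, of degree, and of $P$-adic depth all coincide cleanly; when $\deg P(t)=d>1$ a single step in the $P(t)$-adic expansion corresponds to $d$ steps in the $t$-adic norm, and linear algebra over the larger field $\F_{q^d}$ must be reconciled with multiplier polynomials $N(t)$ drawn from the smaller ring $\F_q[t]$. Managing this discrepancy carefully, and confirming that the $\F_{q^d}$-rank condition corresponds to membership in $\textbf{Bad}(P(t),q)$ with the same integer deficiency $D(\Theta)$ rather than some shifted value, is where the real work lies. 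Once this characterization is in place, the remainder of the argument transports from Theorem \ref{main} with essentially cosmetic changes.
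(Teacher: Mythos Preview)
The statement you are attempting to prove is listed in the paper as a \emph{conjecture} (Conjecture~\ref{all}); the paper gives no proof, and explicitly flags it as open. So there is no paper argument to compare against, and the question is simply whether your plan works.

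It does not, and the gap is precisely at the point you identify as ``cosmetic''. You assert that the reduction of the counting problem to the system $\Xi\mathbf{x}=\mathbf{z}$ from equation~(\ref{matrix2}) ``survives unchanged, except that $\Xi$ now has entries in $\F_{q^d}$''. This is where the proof of Theorem~\ref{main} actually breaks for general $\Theta\in\textbf{Bad}(P(t),q)$. Lemma~\ref{kronmat} (and the displayed identity $\widetilde{K}_n(\Theta)=\sum_{i\ge1}\bigl(\sum_j a_{i+j}n_j(t)\bigr)P(t)^{-i}$ in its proof) relies on the fact that $a_i\in\F_q$, so that each product $a_{i+j}n_j(t)$ has degree strictly less than $d$ and the $P(t)$-adic expansion of $n(t)\Theta(t)$ has no carries. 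For a generic $\Theta$ with coefficients $a_i(t)\in\F_q[t]$ of degree up to $d-1$, the product $a_i(t)n_j(t)$ has degree up to $2d-2$ and generates a carry into the $P(t)^{-i+1}$ coefficient. The map ``$P(t)$-adic coefficient $\mapsto$ element of $\F_{q^d}$'' is $\F_q$-linear but not multiplicative on $\F_q((t^{-1}))$, so the coefficient of $P(t)^{-i}$ in $\langle n(t)\Theta(t)\rangle$ is \emph{not} the $\F_{q^d}$-linear form $\sum_j a_{i+j}n_j$; it picks up lower-order contributions that destroy the Hankel structure of $\Xi$.

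The same carry issue undermines the Hankel characterisation you propose. In the forward direction, a small value of $|\langle N(t)P(t)^k\Theta(t)\rangle|$ does \emph{not} translate into an $\F_{q^d}$-linear dependency among the $a_i$ without controlling the carries. In the converse direction, given an $\F_{q^d}$-dependency $(c_0,\dots,c_l)$ one can form $N(t)=\sum c_j(t)P(t)^j\in\F_q[t]$, but the computation of $\langle N(t)\Theta(t)\rangle$ again involves carries, and it is not at all clear that the resulting fractional part is small. You flag this reconciliation between $\F_{q^d}$-linear algebra and multipliers $N(t)\in\F_q[t]$ as ``where the real work lies'', but your sketch does not engage with the carry terms at all, and there is no indication that the obstruction is merely technical rather than genuine. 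As it stands, the proposal does not go beyond the paper's own assessment that the conjecture is open.
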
 \noindent This conjecture is a stronger version of Theorem \ref{main}, as it does not require that $\Theta(t)\in\textbf{Bad}(P(t),q)$ be induced from $\textbf{Bad}(t,q)$ by Theorem \ref{induce}. Currently, no such $\Theta(t)$ are known to exist.\\

\noindent Additionally, the problem of finding low discrepancy Kronecker-Halton sequences in dimension $d\ge3$ is still open. This motivates the following generalised version of Conjecture \ref{all}:
\begin{conjecture}\label{all2}
    Let $\Theta(t)\in\F_q((t^{-1}))$ be a Laurent series, let $k$ be a natural number and let $P_1(t)$, $\dots$, $P_k(t) \in\F_q[t]$ be coprime irreducible polynomials. Assume that $\Theta(t)\in\mathbf{Bad}(P_i(t),q)$ for all $1\le i \le k$. Then the $k+1$ dimensional digital Kronecker-Halton sequence defined by $\Theta(t)$ and the polynomials $P_i(t)$ is low discrepancy. 
\end{conjecture}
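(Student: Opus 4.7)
The natural plan is to adapt the counting framework from the proof of Theorem \ref{main} by combining it with the Chinese Remainder Theorem (CRT) in $\F_q[t]$. Since the polynomials $P_i(t)$ are pairwise coprime, the $k$ Van der Corput constraints decouple: for an elementary box of the form $I_0\times I_1\times\cdots\times I_k\subset[0,1)^{k+1}$, Lemma \ref{mod} converts each condition $V_n(P_i)\in I_i$ into a congruence $n(t)\equiv R_i(t)\pmod{P_i(t)^{l_i}}$ for suitable integers $l_i$, and CRT in $\F_q[t]$ combines these into a single congruence $n(t)\equiv R(t)\pmod{Q(t)}$, where $Q(t):=\prod_{i=1}^{k}P_i(t)^{l_i}$.

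First I would decompose the target axis-parallel box $\mathcal{B}=\prod_{i=0}^{k}[0,\gamma_i)\subset[0,1)^{k+1}$ as a disjoint union of finitely many such elementary boxes, mirroring the $S_1,\dots,S_5$ decomposition from the proof of Theorem \ref{main}. Within each elementary box the Van der Corput congruence fixes the residue of $n(t)$ modulo $Q(t)$, so the remaining freedom in $n(t)$ lies in its higher-degree coefficients. The Kronecker condition $K_n(\Theta)\in I_0$ would then be translated, via an extension of Lemma \ref{kronmat}, into a Hankel-type matrix equation $\Xi\mathbf{x}=\mathbf{z}-\zeta$, where $\zeta$ encodes the fixed residue and $\Xi$ has entries drawn from the coefficients of $\Theta(t)$ in an appropriate compound expansion. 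Counting the $n<N$ that satisfy both constraints and comparing against $N\mu_{k+1}(I_0\times\cdots\times I_k)$ gives a per-box error, and summing over the $\ll\log^{k+1}(N)$ elementary boxes would yield $D^*_N\ll\log^{k+1}(N)/N$.

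The main obstacle, and in my view the deep content of Conjecture \ref{all2}, is establishing the required full-rank property of the matrix $\Xi$. In the proof of Theorem \ref{main}, the crucial fact is that $\Phi(t)=\Theta(P(t))$ has $P(t)$-adic coefficients lying in $\F_q$, so that Lemma \ref{rank} applied to $\Theta(t)\in\textbf{Bad}(t,q)$ directly delivers the needed rank. For a general $\Theta(t)\in\bigcap_{i=1}^{k}\textbf{Bad}(P_i(t),q)$ no such scalar-coefficient expansion is simultaneously available in a base compatible with every $P_i(t)$, and the joint bad approximability hypothesis must be leveraged more subtly. Two routes present themselves: either strengthen the hypothesis by requiring $\Theta(t)$ to be induced from a common $\widetilde{\Theta}(t)\in\textbf{Bad}(t,q)$ via each $P_i(t)$ through some multivariate analogue of Theorem \ref{induce}; or establish a genuinely new rank lemma producing a full-rank Hankel-type matrix from the joint assumption alone. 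The latter is a new Diophantine question over function fields, and it appears closely tied to the open problem noted after Conjecture \ref{all} of whether $\textbf{Bad}(P(t),q)$ contains elements not induced from $\textbf{Bad}(t,q)$.

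Modulo this rank obstacle, I would expect the remaining work to be routine: the CRT decoupling, the treatment of the boundary boxes (analogues of $S_3,S_4,S_5$), and the final triangle-inequality sum all mirror the two-dimensional argument, with combinatorial complexity growing only polynomially in $k$. In this sense, the conjecture reduces cleanly to a single Diophantine statement about simultaneous Hankel ranks for Laurent series lying in several $\textbf{Bad}(P_i(t),q)$ at once.
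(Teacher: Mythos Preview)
The statement you are attempting is Conjecture~\ref{all2}, which the paper explicitly lists in Section~\ref{sect:conj} as an \emph{open problem}; the paper provides no proof for it. There is therefore nothing to compare your proposal against, and any valid proof would constitute new mathematics beyond the paper.

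Your proposal is an honest and well-structured roadmap rather than a proof, and you correctly identify the central gap yourself: the full-rank property of the relevant Hankel-type matrix $\Xi$. In the proof of Theorem~\ref{main} this rank statement is available precisely because $\Phi(t)=\Theta(P(t))$ is induced from some $\Theta(t)\in\textbf{Bad}(t,q)$, so that the $P(t)$-adic coefficients of $\Phi$ lie in $\F_q$ and Lemma~\ref{rank} applies directly. For a general $\Theta(t)\in\bigcap_i\textbf{Bad}(P_i(t),q)$ no such structure is assumed, and---as the paper itself remarks immediately after Conjecture~\ref{all2}---no Laurent series is even known to lie in $\textbf{Bad}(P(t),q)$ for two distinct irreducible $P(t)$. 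Your two suggested routes (a multivariate induction analogue of Theorem~\ref{induce}, or a new simultaneous rank lemma) are both genuinely open, and the paper gives no hint toward either. The CRT decoupling and box-decomposition portions of your plan are plausible extensions of the two-dimensional argument, but ``modulo this rank obstacle'' is doing all the work: without it, the counting in the analogue of $S_1$ cannot be completed, and the proposal does not close.
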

\noindent In \cite[Theorm 2]{bad}, Hofer proved the equivalent statement for the weaker condition $\Theta(t)\in \textbf{Bad}(q)$, getting a discrepancy bound of $$D_{N,\mathbf{H}}\ll \frac{\log^{k+1}(N)}{\sqrt{N}}\cdotp$$ Even if one were to prove Conjecture \ref{all2}, no Laurent series are known to be in $\textbf{Bad}(P(t),q)$ for multiple different polynomials $P(t)$.\\

\noindent It is also reasonable to inquire about a lower bound for the discrepancy of the Hybrid sequences from Theorem \ref{main} to verify that they do not beat the conjectured best possible \ref{conj}. Indeed, perhaps recent methods of Levin \cite{Levin5,Levin1} could be applied to prove the following conjecture: \begin{conjecture}
    The hybrid sequence $\left(\textbf{H}_n\right)_{n\ge0}$ from Theorem \ref{main} satisfies \[D_{N,\textbf{H}} \gg \frac{\log^2(N)}{N}\cdotp\] \end{conjecture}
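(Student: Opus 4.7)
The plan is to adapt the Walsh-function method developed by Levin in \cite{Levin1, Levin5} to the two-dimensional hybrid setting of Theorem \ref{main}. Rather than estimating $D_N^*$ directly, one would introduce Walsh characters $\chi_{k_1, k_2}$ on $[0,1)^2$ with respect to base $q^d$, where $d = \deg(P(t))$, and express the star discrepancy via its Walsh-series expansion. The advantage of working in base $q^d$ is that both components of $\mathbf{H}_n(\Phi, P)$ inherit a natural $q^d$-adic digit structure: for $K_n(\Phi)$ this follows from the expansion $\Phi(t) = \sum a_i P(t)^{-i}$ together with the identity $|P(t)| = q^d$, and for $V_n(P)$ it is already built in by definition.

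The central computation would be the following. For each pair $(k_1, k_2)$ of nonnegative integers with $k_1 + k_2 \le M := \lfloor \log_{q^d}(N)\rfloor - D(\Phi)$, evaluate the Walsh coefficient
\[
\widehat{c}_N(k_1, k_2) \; := \; \frac{1}{N}\sum_{n=0}^{N-1}\chi_{k_1, k_2}\!\bigl(\mathbf{H}_n(\Phi, P)\bigr).
\]
Using Lemma \ref{mod} to rephrase the Van der Corput component as a congruence modulo $P(t)^{k_2}$, and Lemma \ref{kronmat} to rephrase the Kronecker component as a Hankel matrix equation of size $k_1 \times (k_1 + k_2)$, one can reduce $\widehat{c}_N(k_1, k_2)$ to an exponential-type sum over polynomials $n(t) \in \F_q[t]$ of bounded degree, subject to a Hankel matrix constraint. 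Lemma \ref{rank} then guarantees that this matrix has full rank, so after averaging the sum evaluates to a precise, nonvanishing quantity of order $\asymp_{\Phi, P, q} 1$. This is essentially the dual of the upper-bound counting argument used in the proof of Theorem \ref{main}: the same linear algebra that produces small error per box there now produces a lower bound on the Walsh coefficients here.

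The final step is to combine these coefficients into a single test box $\mathcal{B}^\ast \subset [0,1)^2$ witnessing the desired discrepancy. The classical device, going back to Roth \cite{Roth}, is to build an auxiliary function $F(\mathbf{x}) = \prod_{i=1}^{2}\bigl(\sum_{k_i}\varepsilon_{k_i}\phi_{k_i}(x_i)\bigr)$ whose Walsh spectrum is concentrated exactly where $\widehat{c}_N$ is large, and then to approximate the indicator of $\mathcal{B}^\ast$ by such an $F$. Each of the $\asymp \log^2(N)$ resonant pairs $(k_1, k_2)$ with $k_1 + k_2 \le M$ would contribute $\gg 1/N$, yielding $D_N^*(\mathbf{H}) \gg \log^2(N)/N$. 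The hard part will be to ensure that these contributions add constructively rather than cancelling: this requires a careful choice of signs $\varepsilon_{k_i}$ driven by the explicit structure of the Hankel character sums, together with a verification that the "off-resonance" contributions with $k_1 + k_2 > M$ cannot destroy the aggregate estimate. Levin's recent work resolves an essentially analogous obstacle for purely digital Kronecker sequences, and the present hybrid case should admit a parallel treatment once the coefficient computation above is in place; the genuine novelty is controlling the interaction between the two components, where the Hankel rank condition on $\Phi$ must be coordinated with the $P(t)^{k_2}$-periodicity of the Van der Corput part.
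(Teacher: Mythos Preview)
The statement you are attempting to prove is labelled as a \emph{Conjecture} in the paper and appears in Section~\ref{sect:conj} (``Open Problems and Conjectures''). The paper does not prove it; indeed, the surrounding text explicitly suggests that ``perhaps recent methods of Levin \cite{Levin5,Levin1} could be applied'' to establish the lower bound, leaving the question open. There is therefore no proof in the paper to compare your proposal against.

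What you have written is not a proof but a programme: you outline the Walsh-function approach of Levin, identify the relevant Hankel rank input (Lemma~\ref{rank}) and the $P(t)$-adic structure (Lemmas~\ref{mod} and~\ref{kronmat}), and then defer the decisive step --- the constructive-interference argument for the resonant Walsh coefficients --- to ``a parallel treatment'' of Levin's work. This is a plausible strategy, and it aligns with the paper's own speculation, but the sketch does not resolve the genuine obstacle it names: controlling the signs so that the $\asymp \log^2(N)$ contributions do not cancel, and bounding the off-resonance tail. Until those two points are carried out in detail, the proposal remains at the level of heuristic motivation rather than proof. In short, your approach is consistent with what the paper anticipates, but neither you nor the paper has supplied an argument.
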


\noindent Finally, the real version of Theorem \ref{main} is still open. Recall the set $\textbf{Bad}_p$ from Definition \ref{badp}. \begin{conjecture}
    If $\alpha\in\R$ is in $\textbf{Bad}_p$ for some prime $p$, then the hybrid sequence comprised of the Kronecker sequence associated to $\alpha$ and the Van der Corput sequence associated to $p$ is low discrepancy.\label{fin}
\end{conjecture}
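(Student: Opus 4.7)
The plan is to mimic the strategy of Theorem \ref{main}, replacing the Laurent series and Hankel matrix machinery with base-$p$ digit analysis and classical bounds for Kronecker sequences of real badly approximable numbers. Let $\alpha\in\textbf{Bad}_p$ with uniform constant $c>0$ so that $q\norm{q\cdot p^k\alpha}\ge c$ for every $q\ge 1$ and $k\ge 0$. Fix a test box $B=[0,\gamma)\times[0,\lambda)\subset[0,1)^2$, set $M:=\lfloor\log_p N\rfloor$, and expand $\gamma$ and $\lambda$ in base $p$. One then decomposes $B$ into a disjoint union of sub-boxes $I_{j_1,j_2}=[\Gamma_{j_1},\Gamma_{j_1+1})\times[\Lambda_{j_2},\Lambda_{j_2+1})$ with $j_1,j_2\le M$, supplemented by residual strips along the top and right edges, in exact parallel with the decomposition used in the proof of Theorem \ref{main}.

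Each $I_{j_1,j_2}$ further splits into at most $p^2$ atoms of the form $I_1\times I_2=[a/p^{j_1+1},(a+1)/p^{j_1+1})\times[b/p^{j_2+1},(b+1)/p^{j_2+1})$. The real analog of Lemma \ref{mod} is elementary: $v_n^{(p)}\in I_2$ if and only if $n\equiv r\pmod{p^{j_2+1}}$, where $r$ is the integer whose base-$p$ digits are the reversal of those of $b$. Writing $n=r+p^{j_2+1}m$ for $0\le m\le (N-r)p^{-(j_2+1)}$, the condition $k_n(\alpha)\in I_1$ transforms into the translated interval condition $\{m\beta+r\alpha\}\in I_1$ with $\beta:=p^{j_2+1}\alpha$. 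The $\textbf{Bad}_p$ hypothesis guarantees $\beta\in\textbf{Bad}$ with $c(\beta)\ge c$ uniformly in $j_2$, so $\beta$ has bounded partial quotients with a bound depending only on $\alpha$.

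Applying the classical discrepancy estimate for Kronecker sequences of numbers with bounded partial quotients yields, for each atom, a counting error of $O_\alpha(\log N)$. Summing naively over the $O(\log^2 N)$ sub-boxes produces only $D_N\ll\log^3(N)/N$, narrowly missing the target $\log^2(N)/N$ required for low discrepancy in dimension two. The main obstacle is therefore to sharpen the per-atom error from $O(\log N)$ to $O(1)$, exactly as the Hankel full-rank property (Lemma \ref{rank}) achieves in the function field setting. The plausible route is to identify a real analog of Lemma \ref{rank}: a structural characterization of $\textbf{Bad}_p$ that, for the specific congruence-plus-interval counting problem above, produces an exact expression for the number of admissible $m$ rather than a logarithmic fluctuation. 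A secondary route is an Erd\H os--Tur\'an argument, using $\textbf{Bad}_p$ to bound the exponential sums $\sum_{n\le N}e(hn\alpha)$ uniformly across dyadic ranges of $h$, and then combining with a three-distance style analysis at scale $p^{-(j_2+1)}$.

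Overshadowing these technical obstacles is a more fundamental one: the set $\textbf{Bad}_p$ is conjecturally empty by the $p$-adic Littlewood Conjecture of de Mathan and Teuli\'e, so the conjectured statement may well be vacuous. Any substantive proof, or even a test of the plan above, would first require producing a single element of $\textbf{Bad}_p$, which is widely believed to be impossible. The conjecture is thus best viewed as a consistency check with Theorem \ref{main}: should $\textbf{Bad}_p$ ever be shown to be nonempty, the strategy above indicates the most natural attack, and identifying a Hankel-style rank characterization of $\textbf{Bad}_p$ in the real setting is the decisive input one would need.
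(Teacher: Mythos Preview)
The statement you are addressing is not a theorem in the paper but Conjecture~\ref{fin}, listed in the section on open problems; the paper offers no proof whatsoever, only the remark that its value lies in linking a hypothetical failure of $p$-$LC$ to constraints on low discrepancy sequences. There is therefore no ``paper's own proof'' against which to compare your attempt.

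Your proposal is, appropriately, not a proof either, and you are candid about this. The sketch you give is the natural real-variable analogue of the argument for Theorem~\ref{main}: base-$p$ box decomposition, the elementary congruence characterisation of the Van der Corput coordinate, and reduction of the Kronecker coordinate to a shifted Kronecker sequence in $\beta=p^{j_2+1}\alpha$, whose partial quotients are uniformly bounded precisely by the $\textbf{Bad}_p$ hypothesis. You correctly isolate the decisive gap: the classical discrepancy bound for a badly approximable $\beta$ gives only an $O_\alpha(\log N)$ error per atom, yielding $\log^3(N)/N$ overall, whereas in the function field proof the Hankel full-rank property (Lemma~\ref{rank}) upgrades this to an $O(1)$ error per atom. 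Finding a real analogue of that exact-counting mechanism is exactly the missing ingredient, and the paper gives no indication of how to supply it. Your observation that $\textbf{Bad}_p$ is conjecturally empty, so the statement may be vacuous, matches the paper's own commentary following the conjecture.

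In short: your assessment of the situation is accurate and aligned with the paper's; neither you nor the paper proves the statement, and you have correctly identified where the function field argument fails to transfer.
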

\noindent The value of the above conjecture is not in the possibility of finding new low discrepancy hybrid sequences, as it is expected that $\textbf{Bad}_p$ is empty. Instead, Conjecture \ref{fin} would allow one to test the obstructions for $p$-$LC$ to be false by showing that existence of a counterexample would imply strong constraints on the existence low discrepancy sequences. 
\printbibliography
\end{document}